\documentclass[12pt]{amsart}
\usepackage{fullpage}
\usepackage{hyperref}
\usepackage{amsmath}
\usepackage{amsthm}
\usepackage{amssymb}
\usepackage{tikz-cd}
\usepackage[backend=biber,maxbibnames=99,sorting=anyt,doi=false,isbn=false,url=false]{biblatex}
\renewbibmacro{in:}{}
\bibliography{References} 
\usepackage[inline]{enumitem}
\usepackage{color}
\newtheorem{theorem}{Theorem}[section]
\newtheorem*{theorem*}{Theorem}
\newtheorem*{corollary*}{Corollary}

\newtheorem{proposition}[theorem]{Proposition}
\newtheorem*{proposition*}{Proposition}
\newtheorem{lemma}[theorem]{Lemma}
\theoremstyle{definition}
\newtheorem{definition}[theorem]{Definition} 
\newtheorem*{definition*}{Definition}
\newtheorem*{example*}{Example}
\theoremstyle{remark}

\numberwithin{equation}{section}

\DeclareMathOperator{\Int}{Int}
\DeclareMathOperator{\kmax}{kmax}
\DeclareMathOperator{\kmin}{kmin}
\DeclareMathOperator{\rank}{rank}

\newcommand{\abs}[1]{\lvert#1\vert}
\newcommand{\norm}[1]{\lVert#1\Vert}

\newcommand{\isom}{\cong}

\newcommand{\R}{\mathbb{R}}

\newcommand{\N}{\mathbb{N}}

\newcommand{\hp}{\R^2_\leq}
\newcommand{\ohp}{\R^2_<}
\newcommand{\ehp}{\overline{\R}^2_\leq}
\newcommand{\oehp}{\overline{\R}^2_<}
\newcommand{\diag}{\Delta}
\newcommand{\ediag}{\overline{\Delta}}
\newcommand{\Wpt}{W_p^\triangle}

\title{A Hilbert space embedding of persistence diagrams and barcodes}
\author{Peter Bubenik}
\address{University of Florida, Department of Mathematics}
\email{peter.bubenik@ufl.edu}
\begin{document}
  
\begin{abstract}
  For $p \in [1,\infty]$, we show that the persistence landscape gives 1-Lipschitz embeddings of metric spaces of countable persistence diagrams and barcodes with p-Wasserstein distances into an $L^p$ space. 
\end{abstract}

\subjclass{55N31}

\maketitle

\section{Introduction} \label{sec:introduction}

If each of the vector spaces in a persistence module~\cite{Virk:book} is finite dimensional, then it is isomorphic to a direct sum of interval modules~\cite{crawley}.
The collection of these intervals is called a \emph{barcode}.
Replacing intervals with their infimum and supremum, we obtain a collection of ordered pairs called a \emph{persistence diagram}.
Barcodes and persistence diagrams have a canonical one-parameter family of distances, the $p$-Wasserstein distances, where $1 \leq p \leq \infty$, which depend on a choice of distance for intervals~\cite{BE2022,MR4496687}. 
We introduce a one-parameter family of such distances $d_q$, where $1 \leq q \leq \infty$ (Definition~\ref{def:dq}). 
Let $\Wpt$ denote the $p$-Wasserstein distance that uses the distance $d_p$.
When $p=\infty$, $\Wpt$ equals the bottleneck distance~\cite{bottleneck} (Proposition~\ref{prop:bottleneck}). 
When $p=1$, $\Wpt$ is the recently introduced rank-based Wasserstein distance~\cite{bubenik2026rankbaseddistanceintervalmodules} (Proposition~\ref{prop:rank}).

Consider the barcode $B$ given by a sequence of intervals $(I_j)_{j=1}^n$ or $(I_j)_{j=1}^\infty$, where the interval $I_j$ has the form $[b_j,d_j)$, where $-\infty < b_j < d_j < \infty$. 
For an interval $I = [b,d)$, let the \emph{triangle function} of $I$ be the function $\triangle_I: \R \to [0,\infty)$ given by the distance to the complement of $I$.
See the left side of Figure~\ref{fig:triangle}.
If $B$ is infinite, 
let $\triangle_B: \N \times \R \to [0,\infty)$ be given by $\triangle_B(j,t) = \triangle_{I_j}(t)$.
Let $1 \leq p \leq \infty$.
If $p < \infty$, say that the barcode $B$ is $p$-finite if it is finite or if $\norm{\triangle_B}_p < \infty$.
If $p = \infty$, say that $B$ is $p$-finite if it is finite or if $\lim_{j \to \infty} \norm{\triangle_{I_j}}_{\infty} \to 0$.
Let $\mathcal{B}_p$ denote the set of such $p$-finite barcodes. 
Then $(\mathcal{B}_p,\Wpt)$ is a metric space.

Our main result is the following.
\begin{theorem*}[Theorem~\ref{thm:embedding-barcode-countable}]
    Let $1 \leq p \leq \infty$.
    The persistence landscape gives a $1$-Lipschitz embedding,
    \begin{equation*}
        \Lambda: (\mathcal{B}_p,\Wpt) \to L^p(\N \times \R).
    \end{equation*}
\end{theorem*}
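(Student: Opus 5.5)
The map is $\Lambda(B)(k,t)=\lambda_k(t)$, where $\lambda_k(t)$ is the $k$-th largest value of the multiset $\{\triangle_{I_j}(t)\}_j$, padded with zeros. The plan has three parts: reduce the Lipschitz estimate to a pointwise statement about decreasing rearrangements of sequences, integrate in $t$, and prove injectivity separately via the rank function.

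\emph{Step 1: pointwise rearrangement inequality.} For each fixed $t$, the sequence $(\lambda_k(t))_k$ is the decreasing rearrangement of $(\triangle_{I_j}(t))_j$. This rearrangement is well defined for $p$-finite barcodes: for $p<\infty$ the values are in $\ell^p$, and for $p=\infty$ they tend to $0$. In both cases only finitely many values exceed any $\varepsilon>0$. The key lemma is that decreasing rearrangement is $1$-Lipschitz on nonnegative sequences tending to $0$:
\[
\norm{a^*-b^*}_{\ell^p}\le \norm{a-b}_{\ell^p}\qquad (1\le p\le\infty).
\]
I would prove it first for finite sequences. Swapping two entries that are out of order does not increase $\sum|a_i-b_i|^p$, by convexity of $x\mapsto|x|^p$ (or trivially when $p=\infty$), so sorting both sequences by transpositions gives the finite case. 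Truncating to the $N$ largest terms and letting $N\to\infty$ then gives the countable case.

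\emph{Step 2: integrate against a matching.} Let $\sigma$ be a partial matching between $B$ and $B'$. Treat an unmatched interval as matched to an empty interval, whose triangle function is $0$; extra zero entries do not change any decreasing rearrangement. Applying Step 1 at each $t$ to the reindexed sequences $a_j=\triangle_{I_j}(t)$ and $b_j=\triangle_{I'_{\sigma(j)}}(t)$, raising to the $p$-th power and integrating over $t$ gives
\[
\norm{\Lambda(B)-\Lambda(B')}_p^p\le\sum_j\norm{\triangle_{I_j}-\triangle_{I'_{\sigma(j)}}}_p^p .
\]
The case $p=\infty$ is the same argument with a supremum in place of the integral. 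Measurability of each $\lambda_k$ is clear, since each $\lambda_k$ is a $1$-Lipschitz function of $t$ (it is a $k$-th order statistic of $1$-Lipschitz functions).

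To finish this step I would use Definition~\ref{def:dq}, which I expect to give, or at least dominate by,
\[
d_p(I,J)=\norm{\triangle_I-\triangle_J}_p,
\]
with the cost of sending an interval to the diagonal equal to $\norm{\triangle_I}_p$. Taking the infimum over matchings then yields $\norm{\Lambda(B)-\Lambda(B')}_p\le \Wpt(B,B')$. Taking $B'=\emptyset$ shows $\Lambda(B)\in L^p(\N\times\R)$, so the map is well defined.

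\emph{Step 3: injectivity.} Since $\triangle_{[b,d)}(t)\ge h$ exactly when $[t-h,t+h]\subseteq[b,d]$, we have $\lambda_k(t)\ge h$ if and only if at least $k$ intervals of $B$ contain $[t-h,t+h]$. So $\Lambda(B)$ determines the rank function
\[
r(s,u)=\#\{j : [s,u]\subseteq[b_j,d_j]\}\quad\text{for all } s<u .
\]
This count is finite because $p$-finiteness leaves only finitely many intervals of length greater than $u-s$. The multiplicity of each pair $(b,d)$ is then recovered by inclusion–exclusion, that is, by the limit of $r(b,d)-r(b-\epsilon,d)-r(b,d+\epsilon)+r(b-\epsilon,d+\epsilon)$ as $\epsilon\to 0$. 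This is valid because near any fixed point only finitely many intervals have length bounded below. Hence $\Lambda(B)=\Lambda(B')$ forces $B=B'$.

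The main obstacle I expect is Step 1 in the countable setting: making the rearrangement inequality and the passage to the limit rigorous for infinite sequences and infinite matchings (including $p=\infty$). A secondary risk is that Definition~\ref{def:dq} might not give $d_p(I,J)\ge\norm{\triangle_I-\triangle_J}_p$ directly. In that case I would add an explicit comparison lemma between $d_p$ and the $L^p$ distance of triangle functions.
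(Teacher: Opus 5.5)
Your proposal is correct. For finite barcodes it matches the paper's argument: your rearrangement inequality is Lemma~\ref{lem:ot}, and your Step 2 (fix a matching, apply the lemma at each $t$, integrate using Tonelli) is exactly the proof of Theorem~\ref{thm:main}.

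The routes differ for countable barcodes. The paper proves only the finite inequality and then uses the fact that $p$-finite countable barcodes form the completion of the finite ones, so the $1$-Lipschitz map $\Lambda$ extends. It asserts without further argument that this extension is the pointwise $\kmax$ landscape of Definition~\ref{def:pl-countable}. For injectivity it only cites \cite{pl} in the finite case.

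You instead work directly with countable matchings. The cost is proving the rearrangement inequality for nonnegative sequences tending to $0$, and you prove injectivity on all of $\mathcal{B}_p$ via the rank function and inclusion--exclusion. The paper's route is shorter and reuses general completion machinery. Yours is self-contained and supplies two facts the paper leaves implicit:
\begin{enumerate}
\item the pointwise landscape of a countable barcode is the $L^p$ limit of the landscapes of its truncations;
\item the extended map is still injective, which does not follow from the extension construction alone.
\end{enumerate}

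Three details to tighten.

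\textbf{Matchings versus the paper's definition.} The paper defines $\Wpt$ on countable barcodes as $\lim_n \Wpt(B_n,B'_n)$ with $B_n=\sum_{j\le n} I_j$, not as an infimum over countable matchings. You can bridge this without citing their equivalence. Apply your Step 2 to the matching that fixes $I_1,\dots,I_n$ and sends the remaining intervals to $\emptyset$. This gives $\norm{\Lambda_B-\Lambda_{B_n}}_p\le\norm{(\norm{\triangle_{I_j}}_p)_{j>n}}_p\to 0$; for $p=\infty$ the bound is $\sup_{j>n}\norm{\triangle_{I_j}}_\infty\to 0$. Then pass to the limit in $\norm{\Lambda_{B_n}-\Lambda_{B'_n}}_p\le\Wpt(B_n,B'_n)$.

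\textbf{Pointwise values need not be in $\ell^p$.} For fixed $t$, the values $(\triangle_{I_j}(t))_j$ can fail to lie in $\ell^p$. With $p=1$ and $I_j=[-1/j,1/j)$, $B$ is $1$-finite but $\sum_j \triangle_{I_j}(0)=\infty$. What holds, and all your lemma needs, is that these values tend to $0$. Indeed, $\triangle_{I_j}(t)>\varepsilon$ bounds $\norm{\triangle_{I_j}}_p$ below, so only finitely many $j$ qualify.

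\textbf{Truncation in Step 1.} In the countable case, truncate both sequences to a common finite index set, for instance one containing the positions of the $N$ largest entries of each. Do not truncate each sequence to its own top $N$ entries. With a common index set, the truncated difference is still dominated by $\norm{a-b}_p$.
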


This map restricts to a $1$-Lipschitz embedding of the space of finite barcodes $\mathcal{B}$.
Similarly, we let $\mathcal{D}_p$ denote the set of $p$-finite persistence diagrams.

\begin{theorem*}[Theorem~\ref{thm:embedding-pd-countable}]
    Let $1 \leq p \leq \infty$.
    The persistence landscape gives a $1$-Lipschitz embedding,
    \begin{equation*}
        \Lambda: (\mathcal{D}_p,\Wpt) \to L^p(\N \times \R).
    \end{equation*}
\end{theorem*}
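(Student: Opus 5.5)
The statement to prove is the second theorem: the persistence landscape embeds $(\mathcal{D}_p,\Wpt)$ $1$-Lipschitzly into $L^p(\N\times\R)$. The plan is to reduce to the barcode version (Theorem~\ref{thm:embedding-barcode-countable}), using the map from barcodes to persistence diagrams that sends $[b,d)$ to $(b,d)$. The landscape depends only on the multiset of triangle functions $\triangle_{I}$, and these depend only on the endpoints $b,d$. So for each $D\in\mathcal{D}_p$ I take any barcode $B$ with diagram $D$ and set $\Lambda(D)=\Lambda(B)$. This is well defined, and $D$ is $p$-finite exactly when $B$ is.

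\textbf{Key steps, in order.}

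First, I check that $\Wpt$ on diagrams agrees with $\Wpt$ on the corresponding barcodes. The distance $d_p$ between intervals, and the cost of leaving an interval unmatched (its $d_p$-distance to the diagonal, or to the empty interval), are both computed from endpoints only. So matchings between barcodes and matchings between diagrams correspond bijectively with equal costs. This makes the map $\mathcal{B}_p\to\mathcal{D}_p$ an isometric bijection, once we identify barcodes differing only in endpoint type. That identification is harmless here because all intervals are taken of the form $[b,d)$.

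Second, I transport the result. The $1$-Lipschitz bound $\norm{\Lambda(D)-\Lambda(D')}_p\le \Wpt(D,D')$ follows immediately from the barcode theorem.

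Third, I check injectivity. The landscape $\lambda_k(t)$ is the $k$-th largest value of $\{\triangle_{I_j}(t)\}$. One can recover the multiset of points from the landscape: the local maxima of the $\lambda_k$ and the kink structure recover the peaks $((b+d)/2,(d-b)/2)$ with multiplicity. I would cite the injectivity argument already used for barcodes.

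\textbf{Main obstacle.} The delicate point is the first step for countable diagrams. The $p$-Wasserstein distance is an infimum over possibly infinite matchings, and we need the correspondence to preserve both $p$-finiteness and the infimum. For $p=\infty$ the relevant decay condition on $\norm{\triangle_{I_j}}_\infty$ must also match the diagram-side condition. I would handle this by writing the cost of a matching as a sum, or supremum, of interval costs and noting that the correspondence is term-by-term. If the paper instead proves the diagram statement directly, the same Lipschitz argument applies: compare landscapes of finite truncations and pass to the limit using $p$-finiteness to control the tails.
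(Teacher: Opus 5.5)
Your proposal is correct and is essentially the paper's argument with the two steps done in the opposite order. The paper first gets the finite-diagram embedding from the finite-barcode one via $[b,d)\mapsto(b,d)$ (exactly your reduction), and then extends it $1$-Lipschitzly to the completion $\overline{D}_p^\triangle(\ohp)$; you complete first and then transport along the isometry $\overline{D}_p^\triangle(\widehat{\Int}(\R)_{bco})\cong\overline{D}_p^\triangle(\ohp)$ that the paper records, and the two orders commute. Injectivity also transfers through this bijection, so your sketch via local maxima and kinks of the $\lambda_k$ is not needed (and it would need extra care for countable diagrams, where kinks can accumulate).
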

This map also restricts to a $1$-Lipschitz embedding of the space finite persistence diagrams $\mathcal{D}$.
Note that for $p=2$, we obtain explicit embeddings into a separable Hilbert space, which is of particular interest in topological data analysis, as the Hilbert space structure facilitates the use of statistics and machine learning.

\subsection*{Related work}

Our results are indebted to previous work on Wasserstein distances of barcodes and persistence diagrams 
by Bubenik, Elchesen, and Hartsock
\cite{BE2022,MR4496687,bubenik2026rankbaseddistanceintervalmodules}, whose results depend on earlier work 
by Mileyko et al and Blumberg et al
\cite{Mileyko2011,Blumberg:2014}.
More details on the properties of our constructions may be found in these papers.
For more details on the persistence landscape, see the papers of Bubenik, Edwards and Betthauser
\cite{pl,MR4338670,Betthauser:2022aa}.
The cases $p=1$ and $p=\infty$ of the theorems above are due to Bubenik and Zhao
\cite{pl,bubenik2026rankbaseddistanceintervalmodules}.

The question of whether or not various spaces of barcodes and persistence diagrams embed into Hilbert space has been considered by numerous authors.
Most of these previous works consider the Wasserstein distance using distances obtained from the $q$ norms on $\R^2$.
Turner et al~\cite{tmmh:frechet-means} show that since $(\mathcal{D},W_p)$ has arbitrarily close points with distinct geodesics, it does not embed isometrically into any CAT($k$) space for $k>0$, and hence does not isometrically embed into any Hilbert space.

Carri\`ere and Bauer~\cite{MR3968607} show that there does not exist a bi-Lipschitz embedding of persistence diagrams with at most $n$ points into a finite dimensional Hilbert space.
Carri\`ere, Cuturi and Oudot~\cite{Carriere:2017} give a bi-Lipschitz embedding of bounded persistence diagrams with at most n points into an RKHS.
Bate and Garcia Pulido~\cite{MR4801849} give a bi-Lipschitz embedding of persistence diagrams with at most n points into Hilbert space. 
They also show that the distortion of any such embedding goes to $\infty$ as $n \to \infty$.
Mitra and Virk~\cite{mitra2025geometricembeddingsspacespersistence} give a $1$-Lipschitz embedding of persistence diagrams with at most $n$ points into a finite-dimensional Hilbert space with explicit distortion.

Bell et al~\cite{MR4239889} show that $(\mathcal{D},W_p)$ does not have Yu's property $A$ (this property implies the existence of a coarse embedding into Hilbert space).
Bubenik, Wagner, Mitra and Virk \cite{BubenikWagner:2020,Wagner:2021,MR4246818} show that $(\mathcal{D},W_p)$ does not coarsely embed into Hilbert space for $p>2$.
Pritchard and Weighill~\cite{Pritchard_2024} show that the coarse embeddability into Hilbert space for $(\mathcal{D},W_p)$ is equivalent to that for Borel probability distributions in $\R^2$ with finite $p$th moment.

%Divol and Lacombe~\cite{Divol:2021} give $1$-Lipschitz maps from 

Divol and Lacombe~\cite{Divol:2021} and
Bubenik and Elchesen~\cite{Bubenik:2024b} show that $(\mathcal{D}_1,W_1)$ isometrically embeds into the dual space of compactly supported Lipschitz functions on $\hp$ that vanish on $\diag$.

%\section{Background} \label{sec:background}

\section{Definitions and constructions} \label{sec:constructions}

\subsection{Intervals and ordered pairs}

Let $\Int(\R)$ denote the set of intervals in $\R$.
Let $\Int(\R)_{co}$ denote the set of `closed--open' intervals in $\R$ of the form $[b,d)$, where $-\infty < b < d \leq \infty$ together with the empty interval.
Let $\Int(\R)_{bco}$ denote the set of `bounded closed--open' intervals in $\R$ of the form $[b,d)$, where $-\infty < b < d < \infty$ together with the empty interval.
Let $\hp = \{ (b,d) \in \R^2 \ | \ b \leq d\}$.
Let $\diag = \{ (b,b) \in \R^2\}$.
Let $\ehp = \{ (b,d) \in [-\infty,\infty]^2 \ | \ b \leq d\}$.
Let $\ediag = \{ (b,b) \in [-\infty,\infty]^2\}$.
We have a commutative diagram as follows.
\begin{equation} \label{cd:sets}
    \begin{tikzcd}
        \Int(\R)_{bco} \ar[d,"\isom"] \ar[r,hook] & \Int(\R)_{co} \ar[r,hook] & \Int(\R) \ar[d,two heads]\\
        \hp/\diag \ar[rr,hook] & & \ehp/\ediag
    \end{tikzcd}
\end{equation}
The left vertical arrow sends $[b,d)$ to $(b,d)$ and the empty interval to $\Delta$. 
The right vertical arrow sends a nonempty interval $I$ to $(\inf I, \sup I)$ and sends the empty interval to $\ediag$.
The bottom horizontal arrow sends $\diag$ to $\ediag$ and otherwise sends $(b,d)$ to $(b,d)$.
Using this injective map, we will consider $\hp/\diag$ to be a subset of $\ehp/\ediag$.

\subsection{Distances for intervals and ordered pairs} \label{sec:dq}

For $A \subseteq \R$, the map $d_A:\R \to [0,\infty]$ is given by $d_A(t) = \inf_{a \in A} \abs{t-a}$.
As a special case, $d_\emptyset(t) = \infty$.

\begin{definition} \label{def:triangle}
    For an interval $I$, define the \emph{triangle function} for $I$, $\triangle_I: \R \to [0,\infty]$ by $\triangle_I = d_{I^c}$, where $I^c = \R \setminus I$.
    See Figure~\ref{fig:triangle}.
    Note that $\triangle_\emptyset$ is the constant function with value $0$, as is $\triangle_{\{b\}}$ for each $b \in \R$, and $\triangle_{\R}$ is the constant function with value $\infty$.

    We also define the triangle function for $x=(b,d) \in \ehp$. 
    Let $\triangle_x: \R \to [0,\infty]$ be given by $\triangle_x = \triangle_I$, where $I$ is the interval $(b,d) \subseteq \R$, and $(b,b)$ is the empty interval. 
\end{definition}

\begin{figure}
    \centering
    \begin{tikzpicture}%[scale=1.0]
        \draw (0,0) -- (4,0);
        \draw[very thick,blue] (0,0) -- (1,0) -- (2,1) -- (3,0) -- (4,0);
        \filldraw[black] (1,0) circle (0pt) node[anchor=north]{$b$};
        \filldraw[black] (3,0) circle (0pt) node[anchor=north]{$d$};
    \end{tikzpicture}
    \quad \quad \quad
    \begin{tikzpicture}%[scale=1.0]
        \draw (0,0) -- (4,0);
        \draw[very thick,blue] (0,0) -- (2,0) -- (4,2);
        \filldraw[black] (2,0) circle (0pt) node[anchor=north]{$b$};
    \end{tikzpicture}
    \caption{Graphs of triangle functions $\triangle_I$. Left: $I = [b,d)$. Right: $I = [b,\infty)$. }
    \label{fig:triangle}
\end{figure}

\begin{definition} \label{def:dq}
    Let $1 \leq q \leq \infty$.
    Define $d_q: \Int(\R) \times \Int(\R) \to [0,\infty]$ by 
    \begin{equation*} 
        d_q(I,J) = \norm{\triangle_I - \triangle_J}_q.
    \end{equation*}
    Similarly, define $d_q:\ehp \times \ehp \to [0,\infty]$ by $d_q(x,y) = \norm{\triangle_x - \triangle_y}_q$.
\end{definition}

It is easy to check that each $d_q$ is reflexive, symmetric, and satisfies the triangle inequality for the sets in $\eqref{cd:sets}$. 
Thus we have the following.

\begin{proposition}
    Let $1 \leq q \leq \infty$.
    Then
    \begin{enumerate}
        \item $d_q$ is an extended pseudometric on $\Int(\R)$.
        \item $d_q$ is an extended metric on $\Int(\R)_{co}$.
        \item $d_q$ is a metric on $\Int(\R)_{bco}$.
        \item $d_q$ is an extended metric on $\ehp/\ediag$.
        \item $d_q$ is a metric on $\hp/\diag$.
    \end{enumerate}
\end{proposition}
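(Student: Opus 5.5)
The plan is to treat $d_q$ as a pullback of the $L^q$ distance along the map $I \mapsto \triangle_I$ into the space of functions $\R \to [0,\infty]$. We then read off each part from the injectivity and finiteness properties of this map on the various sets in \eqref{cd:sets}.

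First, for part (1), define $d_q(I,J) = \norm{\triangle_I - \triangle_J}_q$. We use the convention that $\infty - \infty = 0$ pointwise when both functions are infinite at a point. This only happens when $I = J = \R$, since $\triangle_I(t) = \infty$ forces $I^c = \emptyset$. Reflexivity and symmetry are immediate. For the triangle inequality, we check pointwise that $\abs{\triangle_I(t) - \triangle_K(t)} \leq \abs{\triangle_I(t) - \triangle_J(t)} + \abs{\triangle_J(t) - \triangle_K(t)}$ in $[0,\infty]$, handling the infinite values by cases. We then apply Minkowski's inequality for the $L^q$ (extended) norm; for $q = \infty$ we use the sup norm. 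Hence $d_q$ is an extended pseudometric on $\Int(\R)$.

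For parts (2) and (4), we need that distance zero implies equality. Each $\triangle_I$ is $1$-Lipschitz wherever it is finite, hence continuous. So $\norm{\triangle_I - \triangle_J}_q = 0$ forces $\triangle_I = \triangle_J$ everywhere, not merely almost everywhere. Next we recover the interval from its triangle function. The set $\{t : \triangle_I(t) > 0\}$ is the interior of $I$, namely $(\inf I, \sup I)$ when $I$ is nonempty and not a point. Intervals with the same interior and nonempty interior are distinguished only by their endpoints. Within $\Int(\R)_{co}$ the closed--open shape is fixed, so the interior determines the interval; the empty interval is the unique one with $\triangle = 0$. For $\ehp/\ediag$, $\triangle_x$ is defined via the open interval $(b,d)$, so it determines $(b,d)$ when $b<d$. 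All points of $\ediag$ give the zero function and are identified in the quotient. This is exactly the step where $\Int(\R)$ fails to be a metric space: for example, $[0,1]$ and $(0,1)$ have the same triangle function. This explains why (1) is only a pseudometric.

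For parts (3) and (5), it remains to show finiteness. For $I = [b,d)$ bounded, $\triangle_I$ is a tent of height $(d-b)/2$ supported on $[b,d]$, so it lies in $L^q$ for every $q$. The difference of two such functions is also in $L^q$, so the distance is finite. Under the bijection $\Int(\R)_{bco} \isom \hp/\diag$ in \eqref{cd:sets} the triangle functions agree, so (5) follows from (3).

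The main obstacle is the identifiability step in (2) and (4). It requires showing that an $L^q$-null difference gives pointwise equality, which is where continuity is needed, and then that $\triangle_I$ determines $I$ within the given class. Everything else is routine Minkowski.
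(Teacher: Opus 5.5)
Your proposal is correct and follows the paper's approach. The paper only remarks that $d_q$, as the pullback of the $L^q$ extended norm along $I \mapsto \triangle_I$, is reflexive, symmetric and satisfies the triangle inequality, and leaves the rest as easy to check. Your argument supplies the details the paper omits: the $\infty-\infty$ convention for $\triangle_{\R}$, separation via continuity of $\triangle_I$ and recovery of the interior from $\{\triangle_I>0\}$, finiteness for bounded intervals, and the identity $\triangle_{[b,d)}=\triangle_{(b,d)}$ that transfers (3) to (5).
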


%A morphism of extended pseudometric spaces is a $1$-Lipschitz map.
Assigning $d_q$ to each of the sets in \eqref{cd:sets}, we obtain a commutative diagram of extended pseudometric spaces and $1$-Lipschitz maps.

\subsection{Finite barcodes and persistence diagrams}

For each of the sets in \eqref{cd:sets}, we choose a distinguished element. 
For the top three sets, it is the empty interval.
For the bottom sets, it is the diagonal, $\diag$ and $\ediag$ respectively.
With these points, \eqref{cd:sets} becomes a commutative diagram of pointed sets. 
Furthermore,
giving these pointed sets the distances $d_q$, \eqref{cd:sets} becomes a commutative diagram of pointed extended pseudometric spaces and $1$-Lipschitz maps.
We write $(X,d,x_0)$ for a pointed extended pseudometric space and $f:(X,d_X,x_0) \to (Y,d_Y,y_0)$ for a $1$-Lipschitz map of pointed extended pseudometric spaces.

A formal sum on a set $S$ is also called a finite multiset on $S$.
We will denote such a sum as $\sum_{i=1}^n s_i$, where $s_i \in S$ and $n \geq 0$.
If $n=0$ then we write this sum as $0$.
We denote the set of all such formal sums as $D(S)$.%
\footnote{Given a set $S$, $D(S)$ denotes the free commutative monoid on the set $S$.}
Given a pointed set $(S,s_0)$, let $D(S,s_0) = D(S \setminus \{s_0\})$.%
\footnote{More formally, $D(S,s_0) = D(S)/D(\{s_0\})$, which is isomorphic to $D(S \setminus \{s_0\})$.}
Given a map $f:(X,x_0) \to (Y,y_0)$ there is an induced map $Df: D(X,x_0) \to D(Y,y_0)$ given by sending $\sum_{i=1}^n x_i$ to $\sum_{i=1}^n f(x_i)$.%
\footnote{If we equip $D(X,x_0)$ with the basepoint $0$, then $D$ is an endofunctor on pointed sets.}

A \emph{finite barcode} is an element of $D(\widehat{\Int}(\R))$, where $\widehat{\Int}(\R) = \Int(\R) \setminus \{\emptyset\}$.
That is, it is a formal sum of nonempty intervals.
A \emph{finite persistence diagram} is an element of 
$D(\ehp/\ediag),\ediag) = D(\oehp)$,
where $\oehp = \{ (b,d) \in [-\infty,\infty]^2 \ | \ b < d\}$.

% A \emph{barcode} is a formal sum of intervals (also called a finite multiset).
% Let $D(\Int(\R))$ denote the set of barcodes.
% A \emph{persistence diagram} is a formal sum of elements of $\ehp/\ediag$ or a formal sum of elements of $\hp/\diag$.
% Let $D(\ehp/\ediag)$ and $D(\hp/\diag)$ denote the corresponding sets of persistence diagrams.

\subsection{Wasserstein distance}

Let $1 \leq p \leq \infty$.

Given a pointed extended pseudometric space $(X,d,x_0)$,
%the set of formal sums on the complement of the basepoint, 
$D(X,x_0)$ has an extended pseudometric $W_p(d)$, called the $p$-Wasserstein distance induced by $d$, given as follows.
\begin{equation*}
    W_p(d)\left(\sum_{i=1}^m x_i, \sum_{i=1}^n y_i\right) = \min_{\sigma \in \Sigma_{m+n}} \norm{ (d(x_i,y_{\sigma(i)}))_{i=1}^{m+n} }_p,
\end{equation*}
where $x_{m+1} = \cdots x_{m+n} = x_0 = y_{n+1} = \cdots y_{n+m}$
and $\Sigma_{m+n}$ denotes the symmetric group on $\{1,\ldots,m+n\}$.
We write $D(X,d,x_0) = (D(X \setminus \{x_0\}), W_p(d), 0)$.
Given a map $f:(X,d_X,x_0) \to (Y,d_Y,y_0)$, which is $1$-Lipschitz by definition,
the map $Df: D(X,x_0) \to D(Y,y_0)$ is $1$-Lipschitz.
That is, we have $Df: D(X,d_X,x_0) \to D(Y,d_Y,y_0)$.%
\footnote{$D$ is an endofunctor on pointed extended pseudometric spaces.}

% Given a distance $d$ on $\Int(\R)$, there is an induced $p$-Wasserstein distance $W_p(d)$ on $D(\Int(\R)$.
% Similarly, given a distance $d$ on $\ehp/\ediag$ or $\hp/\diag$, there is a $p$-Wasserstein distance, $W_p(d)$ on $D(\ehp/\ediag)$ or $D(\hp/\diag)$.
% These distances are reflexive, symmetric, and satisfy the triangle inequality.

\begin{definition} \label{def:Wpt}
%    Let $1 \leq p \leq \infty$.
    Finite barcodes and finite persistence diagrams have the $p$-Wasserstein distance $\Wpt = W_p(d_p)$.
    Thus, for 
    $\alpha = \sum_{i=1}^m (b_i,d_i), \alpha' = \sum_{i=1}^n (b'_i,d'_i) \in D(\oehp)$,
    $\Wpt(\alpha,\alpha') = \Wpt(B,B')$, where 
    $B = \sum_{i=1}^m (b_i,d_i), B' = \sum_{i=1}^n (b'_i,d'_i) \in D(\widehat{\Int}(\R))$.
\end{definition}

\begin{proposition}
%    Let $1 \leq p \leq \infty$.
    \begin{enumerate}
        \item $\Wpt$ is an extended pseudometric on $D(\widehat{\Int}(\R))$.
        \item $\Wpt$ is an extended metric on $D(\widehat{\Int}(\R)_{co})$.
        \item $\Wpt$ is a metric on $D(\widehat{\Int}(\R)_{bco})$.
        \item $\Wpt$ is an extended metric on $D(\oehp)$.
        \item $\Wpt$ is a metric on $D(\ohp)$.
    \end{enumerate}
\end{proposition}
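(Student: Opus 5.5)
The plan has two parts. First we show that $W_p(d)$ is an extended pseudometric on $D(X,x_0)$ for any pointed extended pseudometric space $(X,d,x_0)$. Then we show that when $d$ is an (extended) metric, $W_p(d)$ separates points of $D(X\setminus\{x_0\})$. Items (1)--(5) then follow by applying this to the five pointed spaces in \eqref{cd:sets} with $d=d_p$, using the preceding proposition. For items (3) and (5) we additionally check finiteness.

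\textbf{Pseudometric.} Reflexivity follows by matching each $x_i$ with itself and each padding basepoint with a padding basepoint. Symmetry follows by replacing $\sigma$ with $\sigma^{-1}$, after reordering the padded lists so they correspond. For the triangle inequality, take $\alpha=\sum_{i=1}^m x_i$, $\beta=\sum_{j=1}^n y_j$, $\gamma=\sum_{k=1}^{\ell} z_k$. Pad all three to length $m+n+\ell$ with copies of $x_0$. Padding with extra basepoints does not change $W_p(d)$, since any extra basepoint can be matched to a basepoint at cost $d(x_0,x_0)=0$, and conversely an optimal matching can be rearranged so that this happens. Choose optimal bijections $\sigma:\alpha\to\beta$ and $\tau:\beta\to\gamma$ of the padded lists. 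By the triangle inequality for $d$,
\[
d(x_i,z_{\tau\sigma(i)})\le d(x_i,y_{\sigma(i)})+d(y_{\sigma(i)},z_{\tau\sigma(i)}).
\]
Minkowski's inequality in $\ell^p$ then gives $W_p(d)(\alpha,\gamma)\le W_p(d)(\alpha,\beta)+W_p(d)(\beta,\gamma)$, including when some terms are $\infty$. The minimum exists because $\Sigma_{m+n}$ is finite.

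\textbf{Definiteness.} Suppose $d$ is an extended metric on $X$ and $W_p(d)(\alpha,\beta)=0$. Then some $\sigma$ has every term $d(x_i,y_{\sigma(i)})=0$, so $x_i=y_{\sigma(i)}$ for every $i$. In particular, no non-basepoint $x_i$ is matched to $x_0$, since $x_i\neq x_0$. Hence $\sigma$ restricts to a bijection between the non-basepoint entries of the two lists that preserves labels. So $m=n$ and $\alpha=\beta$ as multisets.

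This is exactly where item (1) differs. On $\Int(\R)$, $d_p$ is only a pseudometric: for example $\{b\}$ and $\emptyset$ both have triangle function $0$, and $(b,d)$ and $[b,d]$ have the same triangle function. So there we keep only the pseudometric conclusion. For items (2) and (4), the preceding proposition says $d_p$ is an extended metric on $\Int(\R)_{co}$ and on $\ehp/\ediag$, so definiteness applies. We must also note that removing the basepoint gives $\widehat{\Int}(\R)_{co}$ and $\oehp$ respectively.

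\textbf{Finiteness.} For items (3) and (5) we need $W_p(d_p)<\infty$. For a bounded interval $[b,d)$, the triangle function is compactly supported and bounded by $(d-b)/2$. So $d_p(x,\emptyset)<\infty$, and by the triangle inequality every $d_p(x,y)$ is finite, giving finite Wasserstein cost. The consistency claim in Definition~\ref{def:Wpt}, identifying diagrams with barcodes via the isomorphism in \eqref{cd:sets}, makes items (4) and (5) the transported versions of the barcode statements.

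The main obstacle is the triangle inequality. The diagonal padding has to be handled carefully, so that the composite bijection is well defined and its cost is bounded termwise. Standardizing all lists to a common length with basepoints, and justifying that this standardization leaves $W_p$ unchanged, is the step I would write out carefully.
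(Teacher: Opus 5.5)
Your proposal is correct and takes essentially the same route as the paper. The paper gives no separate proof: it asserts before Definition~\ref{def:Wpt} that $W_p(d)$ is an extended pseudometric on $D(X,x_0)$ for any pointed extended pseudometric space, and implicitly uses that it is an (extended) metric when $d$ is, relying on the cited Wasserstein literature. It combines this with the preceding proposition on $d_q$ for the pointed spaces in \eqref{cd:sets}, and you do the same, except that you verify these general facts directly. Your verifications are padding invariance, composing optimal matchings and applying Minkowski, extracting a zero-cost label-preserving matching for definiteness, and finiteness for bounded intervals.
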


Applying $D$ to \eqref{cd:sets}, viewed as a commutative diagram of pointed extended pseudometric spaces and $1$-Lipschitz maps, we have the following commutative diagram of extended pseudometric spaces and $1$-Lipschitz maps.
\begin{equation*} \label{cd:Wpt}
    \begin{tikzcd}
        (D(\widehat{\Int}(\R)_{bco}),\Wpt) \ar[d,"\isom"] \ar[r,hook] & (D(\widehat{\Int}(\R)_{co}),\Wpt) \ar[r,hook] & (D(\widehat{\Int}(\R)),\Wpt) \ar[d,two heads]\\
        (D(\ohp),\Wpt) \ar[rr,hook] & & (D(\oehp),\Wpt)
    \end{tikzcd}
\end{equation*}

\subsection{Persistence landscapes}

Given a sequence $a = (a_1,\ldots,a_n)$ of nonnegative numbers and $1 \leq k \leq n$,
let $\kmax a$ be the $k$th largest element of $a$.
For $k > n$, let $\kmax a$ equal $0$.

\begin{definition} \label{def:pl}
    Define $\Lambda: D(\widehat{\Int}(\R)) \to [0,\infty]^{\N \times \R}$ as follows.
    For the finite barcode $B = \sum_{j=1}^n I_j \in D(\widehat{\Int}(\R))$,
    \begin{equation*}
        \Lambda_B(k,t) = \kmax (\triangle_{I_j}(t))_{j=1}^n.
    \end{equation*}
    Call $\Lambda_B$ the \emph{persistence landscape} of $B$.
    Similarly, define $\Lambda: D(\oehp) \to [0,\infty]^{\N \times \R}$ %and $\Lambda: D(\hp/\diag) \to [0,\infty]^{\N \times \R}$ 
    as follows.
    For the finite persistence diagram $\alpha = \sum_{j=1}^n x_j$,
    the persistence landscape of $\alpha$ is given by
    \begin{equation*}
        \Lambda_\alpha(k,t) = \kmax (\triangle_{x_j}(t))_{j=1}^n.
    \end{equation*}
    Thus, for $\alpha = \sum_{j=1}^n (b_j,d_j) \in D(\oehp)$, 
    $\Lambda_\alpha = \Lambda_B$, where $B = \sum_{j=1}^n (b_j,d_j) \in D(\widehat{\Int}(\R)$.
\end{definition}

\section{Results for finite barcodes and persistence diagrams}

Let $1 \leq p \leq \infty$. 
The following is the main result for this section.

\begin{theorem} \label{thm:main}
    Let $B,B' \in D(\widehat{\Int}(\R))$.
    \begin{equation*}
        \norm{\Lambda_B-\Lambda_{B'}}_p \leq \Wpt(B,B').
    \end{equation*}
\end{theorem}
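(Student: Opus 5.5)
Fix an optimal matching $\sigma\in\Sigma_{m+n}$ realizing $\Wpt(B,B')$, where $B=\sum_{i=1}^m I_i$ and $B'=\sum_{i=1}^n J_i$ are padded with empty intervals to common length $N=m+n$ (so $I_{m+1}=\cdots=I_N=\emptyset=J_{n+1}=\cdots=J_N$). Since $\triangle_\emptyset=0$ and $\kmax$ of a sequence padded with zeros agrees with $\kmax$ of the original sequence (by the convention $\kmax a=0$ for $k>n$), we have $\Lambda_B(k,t)=\kmax(\triangle_{I_i}(t))_{i=1}^N$ and $\Lambda_{B'}(k,t)=\kmax(\triangle_{J_{\sigma(i)}}(t))_{i=1}^N$ for every $(k,t)\in\N\times\R$. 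Write $a(t)=(\triangle_{I_i}(t))_{i=1}^N$ and $b(t)=(\triangle_{J_{\sigma(i)}}(t))_{i=1}^N$, both in $[0,\infty]^N$.

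The key pointwise step is the rearrangement inequality for sorted sequences: for $a,b\in[0,\infty)^N$ with decreasing rearrangements $a^*,b^*$, and any $1\le p\le\infty$,
\begin{equation*}
\norm{a^*-b^*}_p\le\norm{a-b}_p .
\end{equation*}
For $p=\infty$ this follows from the fact that $\kmax$ is $1$-Lipschitz in the sup norm (if $\abs{a_i-b_i}\le\varepsilon$ for all $i$, then at least $k$ entries of $b$ exceed $\kmax a-\varepsilon$). For $p<\infty$, I would prove it by a swapping argument: if $a_i>a_j$ but $b_i<b_j$, exchanging $b_i$ and $b_j$ does not increase $\abs{a_i-b_i}^p+\abs{a_j-b_j}^p$, by convexity of $x\mapsto\abs{x}^p$; finitely many swaps sort $b$ consistently with $a$. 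Infinite values need care. If some $\triangle_{I_i}(t)=\infty$, then $I_i=\R$. In that case I would either note that the right-hand side is infinite whenever the matched partner differs (making the inequality trivial), or observe that matched copies of $\R$ contribute equal $\infty$ entries that are sorted to the same position, so I adopt the convention $\infty-\infty=0$ consistently with $d_q$. This gives, for every $t$ (the $k>N$ terms being zero on both sides),
\begin{equation*}
\sum_{k\ge1}\abs{\Lambda_B(k,t)-\Lambda_{B'}(k,t)}^p\le\sum_{i=1}^N\abs{\triangle_{I_i}(t)-\triangle_{J_{\sigma(i)}}(t)}^p .
\end{equation*}

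Integrating over $t$ and applying Tonelli (everything is nonnegative and measurable, since the triangle functions are continuous or constant) yields
\begin{equation*}
\norm{\Lambda_B-\Lambda_{B'}}_p^p\le\sum_{i=1}^N\norm{\triangle_{I_i}-\triangle_{J_{\sigma(i)}}}_p^p=\sum_{i=1}^N d_p(I_i,J_{\sigma(i)})^p=\Wpt(B,B')^p .
\end{equation*}
For $p=\infty$, take the supremum over $(k,t)$ of the pointwise bound $\abs{\Lambda_B(k,t)-\Lambda_{B'}(k,t)}\le\max_i\abs{\triangle_{I_i}(t)-\triangle_{J_{\sigma(i)}}(t)}\le\max_i d_\infty(I_i,J_{\sigma(i)})$.

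The main obstacle is the pointwise sorted-rearrangement inequality, specifically making the swapping/convexity argument clean and handling extended values. Everything else is bookkeeping with the padding convention and Tonelli.
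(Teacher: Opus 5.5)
Your proof is correct and follows the paper's argument: fix an optimal matching $\sigma$, apply the sorted-rearrangement inequality (the paper's Lemma~\ref{lem:ot}) pointwise in $t$, and use Tonelli to interchange the sum over $i$ or $k$ with the integral over $t$. Your version is somewhat more self-contained than the paper's. The paper justifies the lemma by citing monotone rearrangement in optimal transport and gets $p=\infty$ by a limit, and it leaves the infinite values from copies of $\R$ implicit; you prove the lemma directly by a convexity swap argument, treat $p=\infty$ separately, and handle the copies of $\R$ explicitly.
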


First we prove a special case for which we have equality.

\begin{proposition}
    Let $B=I$ and $B' = 0$. Then $\norm{\Lambda_B - \Lambda_{B'}}_p = \norm{\triangle_I}_p =  \Wpt(B,B')$.
\end{proposition}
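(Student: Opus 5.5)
Both equalities are computed directly from the definitions, one per side. Here $B = I$ is a single nonempty interval and $B' = 0$ is the empty sum, so I treat the two sides separately.

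\emph{Landscape side.} By Definition~\ref{def:pl}, $\Lambda_{B}(1,t) = \triangle_I(t)$, since the list $(\triangle_{I}(t))$ has one entry. For $k \geq 2$ we have $k > n = 1$, so $\Lambda_B(k,t) = 0$. Likewise $\Lambda_{B'}(k,t) = 0$ for all $k$, because $B'$ has $n = 0$. Hence $\Lambda_B - \Lambda_{B'}$ is supported on $\{1\} \times \R$, where it equals $\triangle_I$. Using counting measure on $\N$ times Lebesgue measure on $\R$, we get $\norm{\Lambda_B - \Lambda_{B'}}_p = \norm{\triangle_I}_p$, both for $p<\infty$ and for $p=\infty$.

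\emph{Wasserstein side.} By definition, $W_p(d_p)(I,0)$ is a minimum over $\sigma \in \Sigma_{1+0} = \Sigma_1$, which has a single element. The padded sequences are $x_1 = I$ and $y_1 = x_0 = \emptyset$, the basepoint. So $\Wpt(B,B') = \norm{(d_p(I,\emptyset))}_p = d_p(I,\emptyset)$. Since $\triangle_\emptyset \equiv 0$ (noted in Definition~\ref{def:triangle}), Definition~\ref{def:dq} gives $d_p(I,\emptyset) = \norm{\triangle_I - 0}_p = \norm{\triangle_I}_p$. Combining the two sides gives the claimed chain of equalities.

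The only delicate point is the case where $I$ is unbounded or $\triangle_I$ takes the value $\infty$, for instance $I=\R$. There both sides equal $\infty$ under the same convention, since every quantity is literally $\norm{\triangle_I}_p$. So I expect no real obstacle beyond keeping the conventions $\kmax = 0$ for $k>n$ and $\triangle_\emptyset = 0$ straight.
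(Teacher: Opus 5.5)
Your proposal is correct and follows the paper's own argument: $\Lambda_B - \Lambda_{B'}$ equals $\triangle_I$ in the $k=1$ slot and $0$ elsewhere, and the unique permutation in $\Sigma_1$ pairs $I$ with the basepoint $\emptyset$, so $\Wpt(B,B') = d_p(I,\emptyset) = \norm{\triangle_I}_p$. Your remarks on the $\kmax$ convention for $k>n$, on $\triangle_\emptyset = 0$, and on the infinite-valued case are accurate and simply spell out what the paper leaves implicit.
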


\begin{proof}
    $\Lambda_B(k,t) - \Lambda_{B'}(k,t) = \triangle_I(t)$ if $k=1$ and $0$ otherwise.
    $\Wpt(B,B') = d_p(I,\emptyset) = \norm{\triangle_I}_p$.
\end{proof}

For the proof of Theorem~\ref{thm:main}, we need a lemma.
Given a sequence $a = (a_i)_{i=1}^n$, for $1 \leq k \leq n$, let $a_{(k)} = \kmin a$, where $\kmin a$ is the $k$th smallest element of the sequence $a$.
That is, we have reordered to the sequence $a$ to obtain the sequence $(a_{(k)})_{k=1}^n$ such that $a_{(1)} \leq \cdots \leq a_{(n)}$.

\begin{lemma} \label{lem:ot}
    Given sequences $(a_i)_{i=1}^n$ and $(b_i)_{i=1}^n$,
    \begin{equation*}
        \norm{(a_{(i)} - b_{(i)})_{i=1}^n}_p \leq \norm{(a_i-b_i)_{i=1}^n}_p.
    \end{equation*}
\end{lemma}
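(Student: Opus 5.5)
The plan is to prove the inequality by an exchange (bubble-sort) argument. Order the $a$'s increasingly, and reorder the $b$'s by the same permutation so that the pairing between $a_i$ and $b_i$ is preserved. Both sides of the inequality are invariant under applying a common permutation to the two sequences. So we may assume $a_1 \leq \cdots \leq a_n$, that is, $a_i = a_{(i)}$, and we must show that sorting $b$ does not increase $\norm{(a_i - b_{\tau(i)})_i}_p$.

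If $b$ is not sorted, there exist $i<j$ with $b_i > b_j$. Swapping $b_i$ and $b_j$ strictly decreases the number of inversions of $b$. After finitely many swaps $b$ is sorted, and at that point the sequence of differences is exactly $(a_{(i)}-b_{(i)})_i$. It therefore suffices to show that a single swap does not increase the $p$-norm. Since all other coordinates are untouched, this reduces to a two-point inequality.

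\textbf{Key step (two-point inequality).} For $x\leq y$ and $u\leq v$ we want
\begin{equation*}
\norm{(x-u,\,y-v)}_p \leq \norm{(x-v,\,y-u)}_p .
\end{equation*}
For $p=\infty$ this says $\max(\abs{x-u},\abs{y-v}) \leq \max(\abs{x-v},\abs{y-u})$. We check it by cases: each of $x-u$ and $y-v$ lies between $x-v$ and $y-u$, since $x-v \leq x-u \leq y-u$ and $x-v\leq y-v\leq y-u$. Hence each term on the left is bounded in absolute value by the right-hand maximum.

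For $1\leq p<\infty$, set $\phi(s)=\abs{s}^p$, which is convex. The four differences satisfy
\begin{equation*}
x-v \;\leq\; x-u,\; y-v \;\leq\; y-u, \qquad (x-u)+(y-v) = (x-v)+(y-u).
\end{equation*}
The pair $(x-u,y-v)$ therefore has the same sum as $(x-v,y-u)$ but lies inside the interval spanned by it, so it is majorized by that pair. Convexity of $\phi$ then gives $\phi(x-u)+\phi(y-v)\leq\phi(x-v)+\phi(y-u)$. Concretely, write $x-u = \lambda(x-v)+(1-\lambda)(y-u)$ and $y-v = (1-\lambda)(x-v)+\lambda(y-u)$ with the same $\lambda\in[0,1]$, which is possible because the sums agree, and then apply Jensen's inequality to each term. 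Taking $p$-th roots yields the claim.

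The main obstacle is this majorization step: one must verify that both middle terms sit between the extremes and that the same $\lambda$ works for both. The degenerate case $y-u = x-v$ forces all four differences to be equal and is trivial. Everything else is bookkeeping.
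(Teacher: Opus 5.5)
Your proof is correct. It differs from the paper's mainly in being self-contained. For $p<\infty$ the paper invokes the standard optimal-transport fact that, for a convex cost such as $\abs{t}^p$, the monotone rearrangement is an optimal coupling on the real line, which gives $\sum_i \abs{a_{(i)}-b_{(i)}}^p \le \sum_i \abs{a_i-b_i}^p$ at once. It then gets $p=\infty$ by letting $p\to\infty$, which is legitimate for finite vectors. You instead prove the optimal-transport fact from scratch in this discrete setting. You reduce to sorted $a$, remove inversions of $b$ one swap at a time, and control each swap by the two-point ``uncrossing'' inequality, which you derive from convexity using a common $\lambda$. This exchange argument is essentially the usual proof of the monotone-rearrangement result, so the underlying idea is the same. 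Your version also gives slightly more: it proves $\sum_i \phi(a_{(i)}-b_{(i)}) \le \sum_i \phi(a_i-b_i)$ for every convex $\phi$. Your direct treatment of $p=\infty$, noting that both $x-u$ and $y-v$ lie in $[x-v,y-u]$, avoids the limiting step altogether. The paper's route buys brevity at the cost of relying on an external result, while yours costs a paragraph of bookkeeping but leaves nothing to citation. One small point: your claim that swapping a non-adjacent inverted pair strictly lowers the inversion count is true. Restricting to adjacent transpositions would make that step immediate.
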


\begin{proof}
    First, assume that $p < \infty$.
    Since the function $\abs{t}^p$ is convex, 
    optimal transportation on the real line is given by a monotone rearrangement.
    That is,
    \[
    \sum_{i=1}^n \abs{a_{(i)} - b_{(i)}}^p \leq \sum_{i=1}^n \abs{a_i-b_i}^p.
    \]
    Taking the $p$th root, we obtain the desired result.
    Taking the limit as $p \to \infty$, we obtain the remaining case.
\end{proof}

\begin{proof}[Proof of Theorem~\ref{thm:main}]
    Let $B = \sum_{i=1}^m I_i$ and $B' = \sum_{i=1}^n I'_i$.
    Choose $\sigma \in \Sigma_{m+n}$ such that 
    \begin{equation*}
        \Wpt(B,B') = \norm{ (d_p(I_i,I'_{\sigma(i)}))_{i=1}^{m+n}}_p,
    \end{equation*}
    where $I_{m+1} = \cdots I_{m+n} = \emptyset = I'_{n+1} = \cdots = I'_{n+m}$.
    Using Definition~\ref{def:dq}, Tonelli's theorem, Lemma~\ref{lem:ot}, and Tonelli's theorem again, we have the following.
    \begin{align*}
        \Wpt(B,B') 
        &= \norm{ ( \norm{ ( \triangle_{I_i} - \triangle_{I'_{\sigma(i)}} }_p )_{i=1}^{m+n} }_p \\
        &= \norm{ \norm{ ( \triangle_{I_i}(t) - \triangle_{I'_{\sigma(i)}}(t) )_{i=1}^{m+n} }_p }_p \\
        &\geq \norm{ \norm{ ( \Lambda_B(k,t) - \Lambda_{B'}(k,t) )_{k=1}^{m+n} }_p }_p \\
        &= \norm{ \Lambda_B - \Lambda_{B'} }_p. \qedhere
    \end{align*}
\end{proof}
 
From our main result, we obtain the following three results.

\begin{theorem}
    Let $\alpha,\alpha' \in D(\oehp)$.
    \begin{equation*}
        \norm{\Lambda_\alpha-\Lambda_{\alpha'}}_p \leq \Wpt(\alpha,\alpha').
    \end{equation*}
\end{theorem}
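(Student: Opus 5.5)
The plan is to reduce this statement directly to Theorem~\ref{thm:main}, using the fact that both the persistence landscape and the distance $\Wpt$ on $D(\oehp)$ are defined by passing through barcodes. Write $\alpha = \sum_{i=1}^m (b_i,d_i)$ and $\alpha' = \sum_{i=1}^n (b'_i,d'_i)$ with $b_i<d_i$ and $b'_i<d'_i$ in $[-\infty,\infty]$. Associate to them the finite barcodes $B = \sum_{i=1}^m (b_i,d_i)$ and $B' = \sum_{i=1}^n (b'_i,d'_i)$ in $D(\widehat{\Int}(\R))$, whose summands are the open intervals $(b_i,d_i)\subseteq\R$. These intervals are nonempty since $b_i<d_i$.

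The first step is to record the two identifications that the paper builds into its definitions. By Definition~\ref{def:Wpt}, $\Wpt(\alpha,\alpha') = \Wpt(B,B')$. This is consistent with Definition~\ref{def:dq}, since $d_p$ on $\ehp$ is defined through $\triangle_x = \triangle_{(b,d)}$. In particular, matching a point with the diagonal $\ediag$ corresponds to matching an interval with $\emptyset$, because $\triangle_{\ediag}=0=\triangle_\emptyset$. By the last sentence of Definition~\ref{def:pl}, $\Lambda_\alpha = \Lambda_B$ and $\Lambda_{\alpha'} = \Lambda_{B'}$, since the landscapes are computed from the same triangle functions.

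Then I apply Theorem~\ref{thm:main} to $B,B'$ and conclude
\[
\norm{\Lambda_\alpha-\Lambda_{\alpha'}}_p = \norm{\Lambda_B-\Lambda_{B'}}_p \leq \Wpt(B,B') = \Wpt(\alpha,\alpha').
\]

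There is no real obstacle here. The only point to check is that Theorem~\ref{thm:main} is stated for all of $D(\widehat{\Int}(\R))$, which allows unbounded intervals and hence extended values. Its proof works in $[0,\infty]$ throughout: Tonelli's theorem holds for nonnegative extended-valued functions, and Lemma~\ref{lem:ot} holds with infinite entries, or the inequality is trivial whenever the right-hand side is $\infty$. So points with coordinates $\pm\infty$ cause no difficulty.
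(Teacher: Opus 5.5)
Your proposal is correct and follows the paper's argument exactly: pass to the corresponding finite barcodes $B,B'$, use $\Wpt(\alpha,\alpha')=\Wpt(B,B')$ from Definition~\ref{def:Wpt} and $\Lambda_\alpha=\Lambda_B$, $\Lambda_{\alpha'}=\Lambda_{B'}$ from Definition~\ref{def:pl}, and then apply Theorem~\ref{thm:main}. Your closing remark about extended values goes beyond what the paper spells out, but you do not need it here, since Theorem~\ref{thm:main} is already stated for all of $D(\widehat{\Int}(\R))$.
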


\begin{proof}
    Let $B,B'$ be the finite barcodes corresponding to the finite persistence diagrams $\alpha,\alpha'$, as in Definition~\ref{def:Wpt}, which gives $\Wpt(\alpha,\alpha') = \Wpt(B,B')$.
    Also, from Definition~\ref{def:pl}, $\Lambda_\alpha = \Lambda_B$ and $\Lambda_{\alpha'} = \Lambda_{B'}$.
    The result now follows from Theorem~\ref{thm:main}.
\end{proof}

Give $\N \times \R$ the product measure $\mu \times \nu$ of the counting measure $\mu$ and the Lebesgue measure $\nu$. It is uniquely defined since $\mu$ and $\nu$ are $\sigma$-finite.
The Lebesgue space $L^p(\N \times \R)$ is a Banach space and it is separable if $p < \infty$.

\begin{theorem} \label{thm:embedding-barcode}
    The persistence landscape gives a $1$-Lipschitz embedding,
    \begin{equation*}
        \Lambda: (D(\widehat{\Int}(\R)_{bco}),\Wpt) \to L^p(\N \times \R).
    \end{equation*}
\end{theorem}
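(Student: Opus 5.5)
The statement has three parts: $\Lambda_B$ lies in $L^p(\N \times \R)$ for every finite barcode $B$ of bounded closed--open intervals, the map is $1$-Lipschitz, and it is injective. The Lipschitz part is Theorem~\ref{thm:main}, so the real work is well-definedness and injectivity.

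\emph{Well-definedness.} Let $B = \sum_{j=1}^n [b_j,d_j)$ with all $b_j,d_j$ finite. I will check measurability and finiteness of the norm separately.
\begin{itemize}
\item Each $\triangle_{I_j}$ is a continuous tent function supported on $[b_j,d_j]$ with height $(d_j-b_j)/2$.
\item For fixed $k$, $\Lambda_B(k,\cdot) = \kmax(\triangle_{I_j})_{j=1}^n$ is continuous, since $\kmax$ is a continuous function of finitely many arguments. Hence $\Lambda_B$ is measurable on $\N\times\R$.
\item $\Lambda_B(k,\cdot)$ is bounded by $\max_j (d_j-b_j)/2$ and supported in $[\min_j b_j,\max_j d_j]$.
\item $\Lambda_B(k,\cdot)=0$ for $k>n$.
\end{itemize}
So $\Lambda_B$ is bounded with support of finite $\mu\times\nu$-measure, and $\Lambda_B \in L^p$ for all $1\le p\le\infty$.

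Alternatively, apply Theorem~\ref{thm:main} with $B'=0$: this gives $\norm{\Lambda_B}_p \le \Wpt(B,0) = \norm{(\norm{\triangle_{I_j}}_p)_j}_p<\infty$. Theorem~\ref{thm:main} then directly gives $\norm{\Lambda_B-\Lambda_{B'}}_p\le \Wpt(B,B')$, which is the $1$-Lipschitz property.

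\emph{Injectivity.} This is the main step. Suppose $\Lambda_B=\Lambda_{B'}$ in $L^p$, i.e.\ almost everywhere. Since each $\Lambda_B(k,\cdot)$ is continuous, they agree everywhere, so it suffices to recover the multiset $B$ from the continuous functions $\lambda_k=\Lambda_B(k,\cdot)$.

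I would recover $B$ from the tent decomposition, as follows.
\begin{itemize}
\item The multiset $\{\triangle_{I_j}(t)\}_j$ of positive values at each $t$ equals the multiset $\{\lambda_k(t)\}_k$ of positive values. So the function $N_B(t,s)=\#\{j : \triangle_{I_j}(t) > s\}$ is determined by $\Lambda_B$.
\item Take the (distributional) second derivative in $t$ of $\sum_k \lambda_k(t)=\sum_j \triangle_{I_j}(t)$.
\item Each tent has second derivative $\delta_{b_j}-2\delta_{m_j}+\delta_{d_j}$, where $m_j=(b_j+d_j)/2$.
\end{itemize}
The difficulty is that cancellations can occur: a birth point of one interval may coincide with the peak of another, so the sum alone is ambiguous. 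I would therefore instead peel off intervals inductively.
\begin{itemize}
\item Let $b=\min_j b_j$, the leftmost point where $\lambda_1$ becomes positive.
\item Among the intervals with this birth, the slopes of $\lambda_k$ just to the right of $b$ tell how many intervals start at $b$.
\item To find their deaths, use the structure of the sublevel sets: for $t>b$ near $b$, the values $t-b$ appear in the multiset $\{\lambda_k(t)\}$ with multiplicity at least the number of intervals born at $b$.
\item An interval $[b,d)$ contributes the value $t-b$ exactly for $t\le m=(b+d)/2$. Since no other interval has birth $<b$, a value $t-b$ can only come from intervals born at $b$, or from descending slopes of other tents.
\end{itemize}
This is where care is needed.

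The cleaner route I would actually take is via the point-counting function. For a point $(x,y)$ with $x<y$, the number of $j$ with $[x,y]\subseteq[b_j,d_j]$ equals $\#\{j : \triangle_{I_j}((x+y)/2)\ge (y-x)/2\}$, since the tent of $[b_j,d_j)$ at $(x+y)/2$ is at least $(y-x)/2$ exactly when $b_j\le x$ and $y\le d_j$. This is read off from $\Lambda_B$ as $\#\{k : \lambda_k((x+y)/2)\ge (y-x)/2\}$. So $\Lambda_B$ determines the rank function $r_B(x,y)=\#\{j: b_j\le x<y\le d_j\}$.

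Möbius inversion (inclusion--exclusion) then recovers each multiplicity:
\[ \#\{j:(b_j,d_j)=(x,y)\} = r_B(x,y)-r_B(x-\epsilon,y)-r_B(x,y+\epsilon)+r_B(x-\epsilon,y+\epsilon) \]
for small $\epsilon>0$. Hence $B=B'$ and $\Lambda$ is injective.
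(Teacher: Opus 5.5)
Your proof is correct and follows the paper's: the $1$-Lipschitz bound is exactly Theorem~\ref{thm:main}. For injectivity the paper only cites \cite{pl}, whereas you prove it directly by reading off the rank function $r_B(x,y)=\#\{k:\Lambda_B(k,\tfrac{x+y}{2})\ge\tfrac{y-x}{2}\}$ and recovering multiplicities by inclusion--exclusion, which is the landscape/rank-function correspondence that citation rests on. Your check that $\Lambda_B\in L^p$ and your use of continuity to pass from a.e.\ equality to pointwise equality fill in details the paper leaves implicit, and the abandoned tent-peeling discussion can simply be deleted.
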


\begin{proof}
    Let $B,B' \in D(\widehat{\Int}(\R)_{bco})$.
    By Theorem~\ref{thm:main},
    $\norm{\Lambda_B-\Lambda_{B'}}_p \leq \Wpt(B,B')$.
    Injectivity of $\Lambda$ is observed in \cite{pl}.
\end{proof}

\begin{theorem} \label{thm:embedding-pd}
    The persistence landscape gives a $1$-Lipschitz embedding,
    \begin{equation*}
        \Lambda: (D(\ohp),\Wpt) \to L^p(\N \times \R).
    \end{equation*}
\end{theorem}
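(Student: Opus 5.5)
The plan is to reduce Theorem~\ref{thm:embedding-pd} to Theorem~\ref{thm:embedding-barcode} by means of the left vertical isomorphism in \eqref{cd:sets}. That map sends $[b,d)\in\Int(\R)_{bco}$ to $(b,d)\in\hp$ with $b<d$ finite, so it restricts to a bijection $\widehat{\Int}(\R)_{bco}\to\ohp$. Applying $D$ gives a bijection
\[
\phi\colon D(\widehat{\Int}(\R)_{bco})\to D(\ohp),
\]
which sends $B=\sum_j [b_j,d_j)$ to $\alpha=\sum_j (b_j,d_j)$.

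First, I will check that $\phi$ is an isometry for $\Wpt$. Definition~\ref{def:Wpt} states directly that $\Wpt(\alpha,\alpha')=\Wpt(B,B')$ when $B,B'$ are the barcodes corresponding to $\alpha,\alpha'$. This also follows from Definition~\ref{def:triangle}: $\triangle_{(b,d)}$ is the distance to the complement of the open interval $(b,d)$, and this agrees with $\triangle_{[b,d)}$ because removing or adding a single endpoint does not change the distance to the complement.

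Second, I will check that Definition~\ref{def:pl} gives $\Lambda_\alpha=\Lambda_{\phi^{-1}(\alpha)}$. So $\Lambda$ on $D(\ohp)$ is the composite of the isometry $\phi^{-1}$ with the map $\Lambda$ on $D(\widehat{\Int}(\R)_{bco})$.

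It remains to confirm that $\Lambda_\alpha$ actually lies in $L^p(\N\times\R)$, and then to deduce the result.
\begin{itemize}
\item[(a)] For a finite diagram with finite coordinates, each $\triangle_{x_j}$ is a bounded, compactly supported, continuous function.
\item[(b)] Consequently $\Lambda_\alpha(k,\cdot)$ is bounded, continuous, and compactly supported for each $k$, and it vanishes for $k>n$. Hence it lies in $L^p$ for every $p\in[1,\infty]$.
\item[(c)] The $1$-Lipschitz property then follows from the previous theorem for $D(\oehp)$, restricted to $D(\ohp)$, or equivalently from Theorem~\ref{thm:main} via $\phi$.
\item[(d)] Injectivity follows because $\phi^{-1}$ is a bijection and $\Lambda$ is injective on $D(\widehat{\Int}(\R)_{bco})$ by Theorem~\ref{thm:embedding-barcode}, as observed in \cite{pl}.
\end{itemize}

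I expect no real obstacle here. The only point requiring care is the injectivity inherited from \cite{pl}. If a self-contained argument were wanted, I would recover $\alpha$ from $\Lambda_\alpha$ as follows: the points $(b,d)$ of $\alpha$, with multiplicity, correspond to the local maxima of the piecewise linear functions $\Lambda_\alpha(k,\cdot)$, located at $t=(b+d)/2$ with height $(d-b)/2$. Some bookkeeping is needed where peaks coincide or where landscape layers cross, and I would handle that by counting multiplicities of slope changes.
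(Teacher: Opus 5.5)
Your proposal is correct and matches the paper's proof: you identify $\alpha$ with the barcode $B$ of half-open intervals, use $\Lambda_\alpha=\Lambda_B$ and $\Wpt(\alpha,\alpha')=\Wpt(B,B')$, and inherit both the $1$-Lipschitz bound and injectivity from Theorem~\ref{thm:embedding-barcode}; your explicit check that $\Lambda_\alpha\in L^p$ is a harmless addition that the paper leaves implicit. If you write out the optional self-contained injectivity argument, note that crossings of tents create local maxima of $\Lambda_\alpha(k,\cdot)$ that do not come from points of $\alpha$, so it is cleaner to read off $\#\{j : b_j\le t-m,\ t+m\le d_j\}=\max\{k : \Lambda_\alpha(k,t)\ge m\}$ for $m>0$ and then recover the multiplicities by inclusion--exclusion.
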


\begin{proof}
    Let $\alpha,\alpha' \in D(\ohp)$.
    Let $B,B' \in D(\widehat{\Int}(\R)_{bco})$ be the corresponding finite barcodes.
    Since $\Lambda_\alpha = \Lambda_B$, $\Lambda_{\alpha'} = \Lambda_{B'}$, and $\Wpt(\alpha,\alpha') = \Wpt(B,B')$, the result now follows from the previous theorem.
\end{proof}

\section{Results for countable barcodes and persistence diagrams}

Let $1 \leq p \leq \infty$.
In this section, we will extend the $1$-Lipschitz embeddings in Theorems \ref{thm:embedding-barcode} and \ref{thm:embedding-pd} to the completions of the metric spaces $(D(\widehat{\Int}(\R)),\Wpt)$ and $(D(\ohp),\Wpt)$.

%\subsection{Countable barcodes and persistence diagrams} % and their persistence landscapes}

%Let $(S,d,s_0)$ be a pointed metric space.
A countable formal sum on a set $S$ is given by $\sum_{i \in I} s_i$, where $s_i \in S$ and $I$ is countable.
Let $\overline{D}(S)$ denote the set of countable formal sums on $S$.
A \emph{countable barcode} is an element of 
$\overline{D}(\widehat{\Int}(\R))$ and
a \emph{countable persistence diagram} is an element of 
$\overline{D}(\oehp)$

We extend the definition of persistence landscapes (Definition~\ref{def:pl}) to countable barcodes and persistence diagrams.

\begin{definition} \label{def:pl-countable}
    We extend the maps in Definition~\ref{def:pl} to define
    $\Lambda: \overline{D}(\widehat{\Int}(\R)) \to [0,\infty]^{\N \times \R}$ and
    $\Lambda: \overline{D}(\oehp) \to [0,\infty]^{\N \times \R}$ as follows.
    For $B = \sum_{j=1}^\infty I_j \in \overline{D}(\widehat{\Int}(\R))$, let%
    \footnote{We need to take some care to define $\kmax$ for infinite sequences, since the sequence may not have a $k$th largest element. Consider a sequence $(a_i)_{i=1}^\infty$, with $a_i \geq 0$.
    Let $L = \lim \sup a \in [0,\infty]$. 
    Let $b$ be the subsequence of $a$ consisting of elements larger that $L$. 
    If $b$ has $N$ terms then define $\kmax a = \kmax b$ if $k \leq N$ and otherwise $\kmax a = L$.
    If $b$ is infinite, then for each $k$, $a$ has a $k$th largest element. 
    For $p$-finite sequences, their limit is $0$ and they have a $k$th largest element for all $k$.}
    \begin{equation*}
        \Lambda_B(k,t) = \kmax (\triangle_{I_j}(t))_{j=1}^\infty
    \end{equation*}
    For $\alpha = \sum_{j=1}^\infty x_j \in \overline{D}(\oehp)$, let
    \begin{equation*}
        \Lambda_\alpha(k,t) = \kmax (\triangle_{x_j}(t))_{j=1}^\infty
    \end{equation*}
\end{definition}

Let $(S,d,s_0)$ be a pointed metric space.
If $p < \infty$ then the countable formal sum on $S \setminus \{s_0\}$,
$\sum_{i \in I} s_i$, is said to be \emph{$p$-finite} for $d$ if 
$\norm{(d(s_i,s_0))_{i \in I}}_p < \infty$.
If $p=\infty$ then it is said to be $p$-finite for $d$ if 
%for every $\eps > 0$ the number of $s_i$ for which $d(s_i,s_0) > \eps$ is finite.
$\lim_{i \to \infty} d(s_i,s_0) = 0$.
Let $\overline{D}_p^d(S,s_0) = \overline{D}_p^d(S \setminus \{s_0\})$ be the set of  countable formal sums on $S \setminus \{s_0\}$ that are $p$-finite for $d$. 
We extend $W_p(d)$ from $D(S,s_0)$ to $\overline{D}_p^d(S,s_0)$ as follows.
For $\sum_{i=1}^\infty s_i, \sum_{i=1}^\infty s'_i \in \overline{D}_p^d(S,s_0)$,
let 
\[
    W_p(d) \left( \sum_{i=1}^\infty s_i, \sum_{i=1}^\infty s'_i \right) = \lim_{n \to \infty} W_p(d) \left( \sum_{i=1}^n s_i, \sum_{i=1}^n s'_i \right).
\]

\begin{lemma}[{\cite[Theorem 6.20, Lemma 6.17]{MR4496687}}]
    $W_p(d)$ is a complete metric on $\overline{D}_p^d(S,s_0)$ and $(\overline{D}_p^d(S,s_0),W_p(d))$ is a completion of $(D(S,s_0),W_p(d))$.
\end{lemma}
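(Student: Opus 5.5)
The plan is to verify, in order, that the limit defining $W_p(d)$ exists and is independent of the enumerations, that it is a metric, that finite sums are dense, and that the space is complete. Write $\alpha = \sum_i s_i$, $\alpha_n = \sum_{i=1}^n s_i$, and $r_i = d(s_i,s_0)$. The finite-sum distance $W_p(d)$ on $D(S,s_0)$ is already an extended pseudometric.

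\emph{Existence of the limit.} Matching the extra points $s_{n+1},\dots,s_m$ to the basepoint gives
\[
W_p(d)(\alpha_n,\alpha_m)\le \norm{(r_i)_{i=n+1}^m}_p .
\]
By $p$-finiteness this tends to $0$ as $n\to\infty$. For $p=\infty$ this uses $r_i\to 0$, so the tail supremum goes to $0$. The triangle inequality then gives
\[
\abs{W_p(d)(\alpha_n,\alpha'_n)-W_p(d)(\alpha_m,\alpha'_m)} \le W_p(d)(\alpha_n,\alpha_m)+W_p(d)(\alpha'_n,\alpha'_m).
\]
Hence the truncated distances form a Cauchy sequence in $\R$ and converge. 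The same tail estimate handles a change of enumeration. Given $\varepsilon$, pick $n$ with tail norm below $\varepsilon$. For any other enumeration, a large enough truncation contains $s_1,\dots,s_n$, and it differs from $\alpha_n$ only by points whose $\norm{\cdot}_p$ is at most the tail. So the limit does not depend on the enumeration. Symmetry and the triangle inequality pass to the limit.

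\emph{Density.} The same estimate gives $W_p(d)(\alpha_n,\alpha)\le\norm{(r_i)_{i>n}}_p\to 0$. So $D(S,s_0)$ is dense, and it remains to prove that the extension is a metric and is complete.

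\emph{Definiteness.} Suppose $W_p(d)(\alpha,\alpha')=0$ and fix a point $x\ne s_0$ with $d(x,s_0)=r$. By $p$-finiteness only finitely many points of $\alpha$ and of $\alpha'$ satisfy $d(\cdot,s_0)>r/2$. Choose $\delta<r/4$ smaller than every nonzero distance among these finitely many points. For large $n$, there is a matching of $\alpha_n$ with $\alpha'_n$ of cost less than $\delta$. Such a matching pairs the points in $B(x,\delta)$ bijectively, because a point within $\delta$ of $x$ cannot be matched to $s_0$ or to a point outside $B(x,2\delta)$. Hence the multiplicity of $x$ is the same in $\alpha$ and $\alpha'$.

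\emph{Completeness.} I expect this to be the main obstacle, and it is where I would need $(S,d)$ itself to be complete, as in the setting of \cite{MR4496687}. Given a Cauchy sequence, pass to a subsequence $(\beta_k)$ with $W_p(d)(\beta_k,\beta_{k+1})<2^{-k}$. Replace each $\beta_k$ by a finite truncation $\gamma_k$ with $W_p(d)(\beta_k,\gamma_k)<2^{-k}$, so that $W_p(d)(\gamma_k,\gamma_{k+1})<3\cdot 2^{-k}$.
\begin{enumerate}
    \item For each $k$, fix an optimal matching $\sigma_k$ between $\gamma_k$ and $\gamma_{k+1}$.
    \item Follow a point through the successive matchings, where a point may be born from $s_0$ at some stage. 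This produces chains $(x_k)_{k\ge k_0}$ with $\sum_k d(x_k,x_{k+1})<\infty$.
    \item Each chain is Cauchy in $S$ and so has a limit. Define $\beta$ to be the countable formal sum of the chain limits that differ from $s_0$.
    \item By Minkowski's inequality in $\ell^p$ applied to the chains, summing the matching costs from stage $k$ onward, $W_p(d)(\gamma_k,\beta)\le\sum_{j\ge k}3\cdot 2^{-j}\to 0$.
    \item The same bound shows that $\beta$ is $p$-finite, since its distance to the finite sum $\gamma_{k}$ is finite.
\end{enumerate}
A Cauchy sequence with a convergent subsequence converges. Together with density, this shows that the space is the completion of $(D(S,s_0),W_p(d))$.
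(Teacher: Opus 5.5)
The paper gives no argument for this lemma; it cites Theorem 6.20 and Lemma 6.17 of \cite{MR4496687}. Your proposal is a correct, self-contained proof built on the truncation-limit definition used here, apart from two repairs in the completeness step described below.

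\begin{itemize}
\item The tail bound $W_p(d)(\alpha_n,\alpha_m)\le\norm{(r_i)_{i=n+1}^m}_p$ gives existence of the limit, independence of the enumeration, and density.
\item A finite-support argument gives definiteness.
\item Completeness comes from threading chains through optimal matchings of finite approximants and summing displacements with Minkowski's inequality.
\end{itemize}

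Doing it by hand makes the hypotheses visible. You are right that completeness of $(S,d)$ must be added, because the lemma as printed is false for incomplete $S$. If $x_n\to x$ with $x$ in the completion of $S$ but not in $S$, the one-point sums $x_n$ are Cauchy for $W_p(d)$ but have no limit in $\overline{D}_p^d(S,s_0)$. The hypothesis does hold where the paper applies the lemma, since $x\mapsto\triangle_x$ identifies $(\hp/\diag,d_p)$ with a closed subset of $L^p(\R)$.

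\emph{First repair.} Step 5 is wrong as justified when $p=\infty$. Finite matching cost from a finite sum only bounds $d(\cdot,s_0)$ on $\beta$, while $\infty$-finiteness requires $d(\cdot,s_0)\to 0$. For example, infinitely many copies of a point $y$ with $d(y,s_0)=1$ are at matching cost $1$ from $0$. Use your chain estimate instead: each chain moves less than $6\cdot 2^{-k}$ after stage $k$. Given $\varepsilon>0$, choose $k$ with $6\cdot 2^{-k}<\varepsilon$. Any chain whose limit has $d(\cdot,s_0)>\varepsilon$ then has $x_k\neq s_0$, so it passes through one of the finitely many points of $\gamma_k$. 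For $p<\infty$ your argument works, via the triangle inequality in $\ell^p$. This step must also come before Step 4, because $W_p(d)(\gamma_k,\beta)$ is only defined once $\beta$ is known to be $p$-finite.

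\emph{Second repair.} Step 4 bounds the cost of a countable matching, whereas $W_p(d)$ on countable sums is defined as a limit over truncations, so you need to connect the two. Since $\gamma_k$ is finite, for $n$ large the truncation $\beta_n$ contains the partners of all points of $\gamma_k$. Restricting the matching to $\beta_n$ then shows $W_p(d)(\gamma_k,\beta_n)$ is at most its cost, and you let $n\to\infty$.

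With these two repairs the argument is complete.
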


\begin{lemma}[{\cite[Theorem 6.2]{MR4768640}}]
    If $(S,d,s_0)$ is separable then so is $(\overline{D}_p^d(S,s_0),W_p(d))$.
\end{lemma}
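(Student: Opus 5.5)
Given a countable dense subset $S_0 \subseteq S$, I would show that the set $\mathcal{C}$ of finite formal sums $\sum_{i=1}^n s_i$ with each $s_i \in S_0 \setminus \{s_0\}$ is dense in $(\overline{D}_p^d(S,s_0),W_p(d))$. The set $\mathcal{C}$ is countable, being a countable union over $n$ of images of $S_0^n$. Density is proved in two steps, an approximation by finite sums followed by a perturbation of their points, chained by the triangle inequality for $W_p(d)$.

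\emph{Step 1: truncation.} Fix $\alpha = \sum_{i=1}^\infty s_i \in \overline{D}_p^d(S,s_0)$ and $\varepsilon>0$. Use the matching that pairs $s_i$ with itself for $i\le n$ and pairs $s_i$ with the basepoint $s_0$ for $i>n$. This gives
\[
W_p(d)\Bigl(\alpha,\sum_{i=1}^n s_i\Bigr) \le \bigl\|(d(s_i,s_0))_{i>n}\bigr\|_p .
\]
If $p<\infty$, the right side is a tail of a convergent series and tends to $0$. If $p=\infty$, it is $\sup_{i>n} d(s_i,s_0)$, which tends to $0$ because $d(s_i,s_0)\to 0$. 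The estimate is first obtained for the finite truncations $\sum_{i=1}^N s_i$ with $N \ge n$. It then passes to the limit defining $W_p(d)$ on $\overline{D}_p^d$, using that this limit exists by the cited lemma \cite{MR4496687}. So we may pick $n$ with this distance less than $\varepsilon/2$.

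\emph{Step 2: perturbation.} Since $s_0 \ne s_i$ and $d$ is a metric, $d(s_i,s_0)>0$ for each $i\le n$. By density, choose $t_i \in S_0$ with $d(s_i,t_i) < \min\bigl(\varepsilon/(2n),\, d(s_i,s_0)\bigr)$. Then $t_i \ne s_0$, so $\beta=\sum_{i=1}^n t_i \in \mathcal{C}$. The identity matching gives
\[
W_p(d)\Bigl(\sum_{i=1}^n s_i,\beta\Bigr) \le \bigl\|(d(s_i,t_i))_{i=1}^n\bigr\|_p \le \sum_{i=1}^n d(s_i,t_i) < \varepsilon/2 .
\]
By the triangle inequality, $W_p(d)(\alpha,\beta)<\varepsilon$.

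\emph{Main obstacle.} The only delicate point is justifying the tail bound in Step 1 for the limit-defined metric on countable sums. The difficulty is that the limit is taken along simultaneous truncations of both sums. I would handle this by noting that the metric is the completion metric. Since the partial sums $\sum_{i=1}^N s_i$ converge to $\alpha$ in the completion, the distance from $\alpha$ is the limit of the finite distances, each of which is bounded by the same tail norm.
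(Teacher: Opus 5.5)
Your proof is correct. The paper gives no argument of its own for this lemma; it simply cites \cite[Theorem 6.2]{MR4768640}, and your density argument is the standard proof of that fact: finite sums supported on a countable dense subset, reached by truncating the tail and then perturbing finitely many points while keeping them off $s_0$. Your ``main obstacle'' paragraph is not needed, because Step 1 already gets the tail bound directly from the limit definition of $W_p(d)$ (for $N \ge n$ the truncation of $\sum_{i=1}^n s_i$ is itself), and density of finite sums is in any case part of the preceding lemma's statement that $\overline{D}_p^d(S,s_0)$ is a completion of $D(S,s_0)$.
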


\begin{definition}
    Write $\overline{D}_p^\triangle$ in place of $\overline{D}_p^{d_p}$.
    We have a complete separable metric space of $p$-finite countable barcodes $(\overline{D}_p^\triangle(\widehat{\Int}(\R)_{bco}),\Wpt)$
    and a complete separable metric space of $p$-finite countable persistence diagrams $(\overline{D}_p^\triangle(\ohp),\Wpt)$.
    The map sending the interval $[b,d)$ to the point $(b,d)$ induces an isometry between these metric spaces.
\end{definition}

%\subsection{Completions of metric spaces of barcodes and persistence diagrams}

Let $X$ and $Y$ be metric spaces and let $\overline{X}$ be a completion of $X$.
A $1$-Lipschitz map $f:X \to Y$ between metric spaces $X$ and $Y$ extends to a $1$-Lipschitz map $\overline{f}: \overline{X} \to Y$ given by defining $\overline{f}(x) = \lim_{n\to \infty} f(x_n)$, where $(x_n)$ is any sequence in $X$ that converges in $\overline{X}$ to $x$.
From this construction, we obtain the following two results.

\begin{theorem} \label{thm:embedding-barcode-countable}
    The persistence landscape gives a $1$-Lipschitz embedding,
    \begin{equation*}
        \Lambda: (\overline{D}_p^\triangle(\widehat{\Int}(\R)_{bco}),\Wpt) \to L^p(\N \times \R).
    \end{equation*}
\end{theorem}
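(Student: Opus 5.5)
Extend the finite-case embedding of Theorem~\ref{thm:embedding-barcode} to the completion using the extension principle stated just before the theorem. The plan has three parts: define the extension, identify it with the landscape of Definition~\ref{def:pl-countable}, and prove injectivity.

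\emph{Extension.} By Theorem~\ref{thm:embedding-barcode}, $\Lambda:(D(\widehat{\Int}(\R)_{bco}),\Wpt)\to L^p(\N\times\R)$ is $1$-Lipschitz. By the cited lemma, $(\overline{D}_p^\triangle(\widehat{\Int}(\R)_{bco}),\Wpt)$ is a completion of this space, and $L^p(\N\times\R)$ is complete. So $\Lambda$ extends uniquely to a $1$-Lipschitz map $\overline{\Lambda}$ on the completion.

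\emph{Identification.} Let $B=\sum_{j=1}^\infty I_j$ be $p$-finite and let $B_n=\sum_{j=1}^n I_j$ be its truncations.

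1. For $p<\infty$, the matching that pairs $I_j$ with itself for $j\le n$ and the remaining $I_j$ with $\emptyset$ gives
\[
\Wpt(B_n,B)\le \norm{(\norm{\triangle_{I_j}}_p)_{j>n}}_p=\Big(\sum_{j>n}\norm{\triangle_{I_j}}_p^p\Big)^{1/p}\to 0,
\]
since $\norm{\triangle_B}_p<\infty$. For $p=\infty$, the same matching bounds $\Wpt(B_n,B)$ by $\sup_{j>n}\norm{\triangle_{I_j}}_\infty\to0$. Hence $B_n\to B$ in the completion, and $\overline{\Lambda}_B=\lim_n \Lambda_{B_n}$ in $L^p$.

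2. I then show that this limit agrees a.e.\ with $\Lambda_B$ as defined in Definition~\ref{def:pl-countable}. Fix $t$. The sequence $(\triangle_{I_j}(t))_j$ tends to $0$: for $p=\infty$ this is immediate, and for $p<\infty$ it holds because the sum $\sum_j\norm{\triangle_{I_j}}_p^p$ is finite. By the footnote to Definition~\ref{def:pl-countable}, the $k$th largest elements exist.

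3. For each fixed $(k,t)$, $\Lambda_{B_n}(k,t)$ increases to $\Lambda_B(k,t)$. Indeed, $\kmax$ of a prefix is monotone in $n$, and it eventually stabilizes once the top $k$ positive values have appeared (or equals $0$ otherwise).

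4. Since $L^p$ convergence yields an a.e.\ convergent subsequence, $\overline{\Lambda}_B=\Lambda_B$ a.e. In particular, $\Lambda_B\in L^p$ and $\Lambda$ itself is $1$-Lipschitz.

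\emph{Injectivity.} This is the main obstacle, since the extension of an injective map to a completion need not be injective. I would try to recover $B$ directly from $\Lambda_B$, as in \cite{pl}.

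1. Each $\triangle_{I_j}$ is a tent function of slope $\pm1$ on $[b_j,d_j]$. The multiset $\{b_j\}$ should be readable from where the functions $\Lambda_B(k,\cdot)$ change slope from $0$ or $-1$ to $+1$, and similarly the multiset $\{d_j\}$.

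2. Finer, I would prove the identity
\[
\sum_k \Lambda_B(k,t)=\sum_j\triangle_{I_j}(t),
\]
which makes sense by pointwise monotone convergence, since the $\Lambda_B(k,t)$ are just the values $\triangle_{I_j}(t)$ rearranged. Then the distributional second derivative of $\sum_j\triangle_{I_j}$ equals $\sum_j(\delta_{b_j}-2\delta_{(b_j+d_j)/2}+\delta_{d_j})$, but this loses the pairing between births and deaths.

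3. To keep the pairing, I would instead use the local structure. Near a.e.\ $t$ only finitely many tents exceed any $\epsilon>0$, so the countable case reduces locally to the finite case. One can then peel off the intervals of length $>2\epsilon$, of which only finitely many meet any bounded window, apply the finite-case injectivity to them, and let $\epsilon\to0$.

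4. The delicate point is making the reduction in step 3 rigorous. Namely, I must show that the restriction of $\{\Lambda_B(k,\cdot)\}$ to the region where its values exceed $\epsilon$ is determined by, and determines, exactly the intervals with $\norm{\triangle_{I_j}}_\infty>\epsilon$, which requires care where tents cross at height near $\epsilon$.
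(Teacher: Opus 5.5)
Your extension and identification steps are the paper's argument: its whole proof is the remark that the countable landscape of Definition~\ref{def:pl-countable} is the $1$-Lipschitz extension of the finite one. Your steps ($B_n\to B$ via the tail matching, $\Lambda_{B_n}(k,t)\uparrow\Lambda_B(k,t)$, a.e.\ identification via a subsequence) correctly supply the details the paper leaves implicit. The paper gives no separate injectivity argument in the countable case. You are right that injectivity does not follow automatically from extending to a completion. However, your injectivity part stops at a plan, and steps 3--4 are explicitly left open. So as written, the \emph{embedding} half of the statement is not proved.

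The missing observation is that the ``care where tents cross at height near $\epsilon$'' is unnecessary.
\begin{itemize}
\item The map $x\mapsto(x-\epsilon)_+$ is nondecreasing, so it commutes with $\kmax$.
\item It sends tents to tents: $(\triangle_{[b,d)}-\epsilon)_+=\triangle_{[b+\epsilon,d-\epsilon)}$, which is $0$ when $d-b\le 2\epsilon$.
\item Hence $(\Lambda_B-\epsilon)_+=\Lambda_{B^\epsilon}$, where $B^\epsilon=\sum_{d_j-b_j>2\epsilon}[b_j+\epsilon,d_j-\epsilon)$. This is a \emph{finite} barcode, since the heights $\frac12(d_j-b_j)$ tend to $0$.
\item Finite-case injectivity (as used in Theorem~\ref{thm:embedding-barcode}) recovers $B^\epsilon$. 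Since $[b,d)\mapsto[b+\epsilon,d-\epsilon)$ is injective, this recovers the multiset of intervals of $B$ of length $>2\epsilon$. Letting $\epsilon\to 0$ recovers $B$.
\end{itemize}

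You also need one point because elements of $L^p(\N\times\R)$ are only defined a.e. Each $\Lambda_B(k,\cdot)$ is $1$-Lipschitz, being a pointwise limit of the $1$-Lipschitz functions $\Lambda_{B_n}(k,\cdot)$. So $\Lambda_B=\Lambda_{B'}$ a.e.\ forces equality everywhere, and the argument above applies.

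An equivalent route avoids the finite case entirely. For $s>0$ you have $\#\{k:\Lambda_B(k,t)>s\}=\#\{j: b_j<t-s,\ d_j>t+s\}$, which is a finite rank count. The multiplicity of each $(b,d)$ is then recovered by inclusion--exclusion as $\delta\to 0$.
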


\begin{proof}
    The persistence landscape for a $p$-finite countable barcode (Definition~\ref{def:pl-countable}) is the $1$-Lipschitz extension of the persistence landscape for finite barcodes (Definition~\ref{def:pl}).
\end{proof}

\begin{theorem} \label{thm:embedding-pd-countable}
    The persistence landscape gives a $1$-Lipschitz embedding,
    \begin{equation*}
        \Lambda: (\overline{D}_p^\triangle(\ohp),\Wpt) \to L^p(\N \times \R).
    \end{equation*}
\end{theorem}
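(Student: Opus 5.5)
The plan is to deduce this statement from Theorem~\ref{thm:embedding-barcode-countable}, exactly as Theorem~\ref{thm:embedding-pd} was deduced from Theorem~\ref{thm:embedding-barcode}.

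\textbf{Step 1: transfer along the isometry.} Let $\phi:(\overline{D}_p^\triangle(\widehat{\Int}(\R)_{bco}),\Wpt) \to (\overline{D}_p^\triangle(\ohp),\Wpt)$ be the isometry induced by $[b,d) \mapsto (b,d)$, given in the definition above. I will check that $\Lambda_{\phi(B)} = \Lambda_B$ for every $p$-finite countable barcode $B$. This holds because Definition~\ref{def:triangle} gives $\triangle_{(b,d)} = \triangle_{[b,d)}$: the open and half-open intervals have the same complement up to a single point, so $d_{I^c}$ agrees. Definition~\ref{def:pl-countable} then applies the same $\kmax$ operation to identical sequences of triangle functions.

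\textbf{Step 2: conclude.} From Step 1, $\Lambda$ on diagrams equals $\Lambda\circ\phi^{-1}$, where this $\Lambda$ is the barcode landscape. That map is a composition of an isometric bijection with a $1$-Lipschitz embedding, so it is a $1$-Lipschitz embedding into $L^p(\N\times\R)$.

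\textbf{Direct route.} Alternatively, one can argue directly. Theorem~\ref{thm:embedding-pd} gives a $1$-Lipschitz map on $D(\ohp)$. By the cited lemma, $\overline{D}_p^\triangle(\ohp)$ is a completion of $D(\ohp)$, so the map extends to a $1$-Lipschitz map on the completion, using the extension construction stated before Theorem~\ref{thm:embedding-barcode-countable}. It then remains to show two things.

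\emph{The extension is $\Lambda$ of Definition~\ref{def:pl-countable}.} For $\alpha = \sum_j x_j$, the truncations $\alpha_n = \sum_{j\le n} x_j$ converge to $\alpha$ in $\Wpt$, using the definition of the extended $W_p$ and $p$-finiteness. I would then show $\Lambda_{\alpha_n} \to \Lambda_\alpha$ in $L^p$. This follows from Lemma~\ref{lem:ot} applied pointwise in $t$ to the padded sequences, which bounds $\norm{\Lambda_{\alpha_n}-\Lambda_\alpha}_p$ by the tail $\norm{(\norm{\triangle_{x_j}}_p)_{j>n}}_p \to 0$. For $p=\infty$ the tail bound is $\sup_{j>n}\norm{\triangle_{x_j}}_\infty \to 0$ instead.

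\emph{Injectivity.} This is the main obstacle, since a $1$-Lipschitz extension of an injective map need not be injective. I would recover the diagram from the landscape. Each $\Lambda_\alpha(k,\cdot)$ is piecewise linear with slopes $\pm1$, and points of $\alpha$ are detected as local maxima (peaks at $((b+d)/2,(d-b)/2)$) of the landscape functions, counted with multiplicity across $k$, as in~\cite{pl}. $p$-finiteness ensures that only finitely many points have $\triangle$-height above any $\varepsilon>0$. So one can reconstruct the points above height $\varepsilon$ from finitely many landscape layers and let $\varepsilon\to0$. This is the same injectivity already used in Theorem~\ref{thm:embedding-barcode-countable}, so in practice I would just cite it through Step 1.
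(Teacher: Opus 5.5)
Your proof is correct, but your main route differs from the paper's. The paper proves this theorem in parallel with Theorem~\ref{thm:embedding-barcode-countable}, not as a consequence of it. It extends the $1$-Lipschitz map of Theorem~\ref{thm:embedding-pd} to the completion $\overline{D}_p^\triangle(\ohp)$ and asserts that this extension is the landscape of Definition~\ref{def:pl-countable}; this is essentially your ``direct route''.

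Your Steps 1--2 instead reduce the statement to Theorem~\ref{thm:embedding-barcode-countable} via the isometry $\phi$, just as Theorem~\ref{thm:embedding-pd} was deduced from Theorem~\ref{thm:embedding-barcode}. The only new input is the termwise identity $\triangle_{(b,d)}=\triangle_{[b,d)}$. This is better justified by noting that both complements have closure $(-\infty,b]\cup[d,\infty)$ and $d_A=d_{\overline{A}}$, rather than by ``differing by a single point''. The reduction is economical, since nothing about completions is redone. But it inherits exactly what Theorem~\ref{thm:embedding-barcode-countable} provides, including its injectivity claim, which the paper's one-line extension argument never actually checks.

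Your direct route supplies the two things the paper leaves implicit:
\begin{itemize}
\item The tail estimate $\norm{\Lambda_{\alpha_n}-\Lambda_\alpha}_p\le\norm{(\norm{\triangle_{x_j}}_p)_{j>n}}_p$ identifies the extension with Definition~\ref{def:pl-countable}. It comes from Lemma~\ref{lem:ot} at each $t$, plus a routine truncation limit since the lemma is stated for finite sequences.
\item You rightly point out that injectivity does not pass to a $1$-Lipschitz extension automatically.
\end{itemize}

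If you write out the injectivity sketch, tighten two points. First, not every local maximum of some $\Lambda_\alpha(k,\cdot)$ comes from a point of $\alpha$. A crossing of a rising and a falling triangle produces a local maximum of $\Lambda_\alpha(k+1,\cdot)$ paired with a local minimum of $\Lambda_\alpha(k,\cdot)$, and the reconstruction in \cite{pl} has to cancel these. Second, the reduction to finitely many points rests on the identity $\max(\Lambda_\alpha,\varepsilon)=\max(\Lambda_{\alpha_{>\varepsilon}},\varepsilon)$, where $\alpha_{>\varepsilon}$ is the finite subdiagram of points whose triangles have height greater than $\varepsilon$. You also need that each $\Lambda_\alpha(k,\cdot)$ is $1$-Lipschitz, so that its $L^p$ class determines it pointwise.
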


\begin{proof}
    The persistence landscape for a $p$-finite countable persistence diagrams (Definition~\ref{def:pl-countable}) is the $1$-Lipschitz extension of the persistence landscape for finite persistence diagrams (Definition~\ref{def:pl}).
\end{proof}

\section{Additional results}

\subsection{The p-finite condition}

In this section we compare the conditions that an countable barcode or persistence diagram is $p$-finite for $d_p$ and $p$-finite for the metric induced by the $p$ norm.

Let $1 \leq p \leq \infty$.
Let $\ell_p$ denote the quotient metric on $\hp/\diag$ obtained from the metric on $\hp$ given by the $p$ norm.
Similarly, also let $\ell_p$ denote the corresponding metric on $\Int(\R)_{bco}$.

Consider the barcode $B = \sum_{j=1}^\infty I_j$.
Let $h = (\frac{1}{2} \nu(I_j))_{j=1}^\infty$, where $\nu(I)$ is the Lebesgue measure (i.e. length) of the interval $I$.

To start, we consider the case $p = \infty$.
For an interval $I$, $d_\infty(I,\emptyset) = \norm{\triangle_I}_\infty = \frac{1}{2} \nu(I)$ and $\ell_\infty(I,\emptyset) = \frac{1}{2} \nu(I)$.
Therefore we have the following.

\begin{proposition}
    The following three conditions are equivalent.
    \begin{enumerate}
        \item $B$ is $\infty$-finite for $d_\infty$
        \item $B$ is $\infty$-finite for $\ell_\infty$.
        \item $\lim h = 0$.
    \end{enumerate}
\end{proposition}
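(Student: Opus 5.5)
The proposition is essentially a direct translation between the three conditions, using the two computations stated just before it. For each bounded interval $I = [b,d)$ we have $d_\infty(I,\emptyset) = \norm{\triangle_I}_\infty = \frac{1}{2}\nu(I)$ and $\ell_\infty(I,\emptyset) = \frac{1}{2}\nu(I)$. So for $B = \sum_{j=1}^\infty I_j$, the two sequences $(d_\infty(I_j,\emptyset))_j$ and $(\ell_\infty(I_j,\emptyset))_j$ both coincide termwise with $h$. By definition, $B$ is $\infty$-finite for a metric $d$ exactly when $\lim_{j\to\infty} d(I_j,\emptyset)=0$. Hence (1) is $\lim h = 0$, and so is (2). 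This gives (1)$\Leftrightarrow$(3)$\Leftrightarrow$(2) immediately.

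The only things to verify are the two computations. For $d_\infty$: the function $\triangle_I(t) = d_{I^c}(t)$ vanishes outside $[b,d]$ and equals $\min(t-b,\,d-t)$ on $[b,d]$. Its supremum is attained at the midpoint $t = (b+d)/2$, with value $(d-b)/2 = \frac12\nu(I)$. The essential supremum agrees with this because $\triangle_I$ is continuous.

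For $\ell_\infty$: the quotient metric on $\hp/\diag$ gives
\[
\ell_\infty((b,d),\diag) = \inf_{c}\max(\abs{b-c},\abs{d-c}).
\]
This infimum is attained at $c=(b+d)/2$, again with value $(d-b)/2$. One should check that the quotient metric distance to the collapsed point $\diag$ is indeed the infimum over diagonal points. That holds because paths through the collapsed point cannot do better than going directly to $\diag$.

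There is no real obstacle. The only subtlety is the convention for finite barcodes, which are trivially $\infty$-finite under every condition: pad $h$ with zeros and all three conditions hold. I would state this explicitly so the equivalence covers both finite and countable $B$.
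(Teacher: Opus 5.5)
Your proposal is correct and matches the paper's argument. The paper likewise observes that $d_\infty(I,\emptyset)=\norm{\triangle_I}_\infty=\frac{1}{2}\nu(I)=\ell_\infty(I,\emptyset)$, so all three conditions reduce to $\lim h=0$; your verifications of these two computations, which the paper states without proof, are accurate.
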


% \begin{proof}
%     $B$ is $\infty$-finite for both $d_\infty$ and $\ell_\infty$ if and only if $\lim \nu(I) = 0$.
% \end{proof}

We also have the corresponding result for countable persistence diagrams.
Now assume that $p < \infty$.

\begin{proposition}
    \begin{enumerate}
        \item $B$ is $p$-finite with respect to $d_p$ if and only if $\norm{h}_{p+1} < \infty$.
        \item $B$ is $p$-finite with respect to $\ell_p$ if and only if $\norm{h}_p < \infty$.
    \end{enumerate}
\end{proposition}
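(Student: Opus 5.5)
The plan is to compute $d_p(I,\emptyset)$ and $\ell_p(I,\emptyset)$ explicitly for a single bounded interval $I=[b,d)$ in terms of its half-length $h_I = \frac{1}{2}\nu(I)$. Then $p$-finiteness of $B$, which is the summability of $\sum_j d(I_j,\emptyset)^p$, becomes a condition on the sequence $h$.

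For (1), $d_p(I,\emptyset)^p = \norm{\triangle_I}_p^p = \int_\R \triangle_I(t)^p\,dt$. The function $\triangle_I$ is a tent of height $h_I$ supported on $I$, made of two linear pieces each of width $h_I$. So
\begin{equation*}
  \int_\R \triangle_I^p = 2\int_0^{h_I} s^p\,ds = \frac{2}{p+1} h_I^{p+1}.
\end{equation*}
Summing over $j$ gives
\begin{equation*}
  \sum_j d_p(I_j,\emptyset)^p = \frac{2}{p+1}\sum_j h_j^{p+1} = \frac{2}{p+1}\norm{h}_{p+1}^{p+1}.
\end{equation*}
Hence $\norm{(d_p(I_j,\emptyset))_j}_p < \infty$ if and only if $\norm{h}_{p+1}<\infty$.

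For (2), I need the quotient distance $\ell_p((b,d),\diag)$, which is the $p$-norm distance from $(b,d)$ to the diagonal. Since $\ell_p$ is a quotient metric, it could a priori be smaller than this via chains through $\diag$. However, collapsing $\diag$ to a point just gives $\ell_p(x,\diag)=\inf_{z\in\diag}\norm{x-z}_p$, so no chain argument is needed. I minimize $\abs{b-c}^p+\abs{d-c}^p$ over $c\in\R$. By convexity and symmetry the minimum is at $c=(b+d)/2$, giving
\begin{equation*}
  \ell_p(I,\emptyset) = (2h_I^p)^{1/p} = 2^{1/p}h_I.
\end{equation*}
Therefore
\begin{equation*}
  \sum_j \ell_p(I_j,\emptyset)^p = 2\norm{h}_p^p,
\end{equation*}
which is finite if and only if $\norm{h}_p<\infty$.

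The only mildly delicate point is the identification of the quotient metric's distance to the basepoint in (2); everything else is direct integration. Constants are irrelevant since only finiteness is at stake.
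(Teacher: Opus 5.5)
Your proposal is correct and follows the paper's proof. Both compute $\norm{\triangle_I}_p^p = \frac{2}{p+1}h_I^{p+1}$ and $\ell_p(I,\emptyset) = 2^{1/p}h_I$ for a single interval, sum over the barcode, and read off the finiteness conditions; your extra justification that the quotient distance to $\diag$ equals the infimum over diagonal points (attained at the midpoint) is a step the paper takes for granted.
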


\begin{proof}
    For an interval $I$, %let $h = \frac{1}{2}\nu(I)$.
    %Then 
    $\norm{\triangle_I}_p = (2 \int_0^{\nu(I)/2} t^p \ dt)^{\frac{1}{p}} = (\frac{2}{p+1})^{\frac{1}{p}} (\frac{1}{2}\nu(I))^{\frac{p+1}{p}}$.
    Recall that $h_j = \frac{1}{2} \nu(I_j)$.
    Thus $B$ is $p$-finite for $d_p$ 
    iff $\norm{(h_j^{\frac{p+1}{p}})}_p < \infty$
    iff $\norm{(h_j^{\frac{p+1}{p}})}_p^p < \infty$
    iff $\sum_{j=1}^\infty h_j^{p+1} < \infty$
    iff $\norm{h}_{p+1} < \infty$.

    For an interval $I$, 
    $\ell_p(I,\emptyset) = \norm{(\frac{1}{2}\nu(I),\frac{1}{2}\nu(I))}_p = 2^{\frac{1}{p}} \frac{1}{2} \nu(I)$.
    Thus $B$ is $p$-finite for $\ell_p$ 
    iff $\norm{h}_p < \infty$.
\end{proof}

We also have the corresponding result for countable persistence diagrams.
As a corollary to the above two Propositions we have the following.

\begin{theorem}
    \begin{enumerate}
    \item $\overline{D}_\infty^\triangle(\ohp) = \overline{D}_\infty^{\ell_\infty}(\ohp)$ and
    $\overline{D}_\infty^\triangle(\widehat{\Int}(\R)_{bco}) = \overline{D}_\infty^{\ell_\infty}(\widehat{\Int}(\R)_{bco})$.
    \item 
    For $1 \leq p < \infty$,
    $\overline{D}_p^\triangle(\ohp) = \overline{D}_{p+1}^{\ell_{p+1}}(\ohp)$ and
    $\overline{D}_p^\triangle(\widehat{\Int}(\R)_{bco}) = \overline{D}_{p+1}^{\ell_{p+1}}(\widehat{\Int}(\R)_{bco})$.
    \item
    For $1 \leq p < \infty$,
    $\overline{D}_p^\triangle(\ohp) \supsetneq \overline{D}_p^{\ell_p}(\ohp)$  and
    $\overline{D}_p^\triangle(\widehat{\Int}(\R)_{bco}) \supsetneq \overline{D}_p^{\ell_p}(\widehat{\Int}(\R))_{bco}$.
    \end{enumerate}
\end{theorem}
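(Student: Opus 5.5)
The plan is to read off all three parts from the two preceding propositions, which express both finiteness conditions in terms of the half-length sequence $h = (\frac12\nu(I_j))_{j}$. Throughout I work with barcodes $B = \sum_j I_j$ in $\widehat{\Int}(\R)_{bco}$. The persistence diagram statements then follow by transport along the map $[b,d) \mapsto (b,d)$. This map preserves $d_p$ by definition, and it preserves $\ell_q$ by definition, since $\ell_q$ on intervals is defined via this correspondence. So each membership condition is literally the same condition on both sides. Finite formal sums lie in every one of these sets, so only countably infinite sums need to be checked.

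For part (1), the first proposition says that $B$ is $\infty$-finite for $d_\infty$ if and only if $\lim h = 0$, if and only if $B$ is $\infty$-finite for $\ell_\infty$. This is exactly the equality $\overline{D}_\infty^\triangle = \overline{D}_\infty^{\ell_\infty}$.

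For part (2), let $1 \le p < \infty$. By part (1) of the second proposition, $B \in \overline{D}_p^\triangle$ if and only if $\norm{h}_{p+1} < \infty$. Applying part (2) of that proposition with exponent $p+1$ in place of $p$ (it holds for every finite exponent), we get $B \in \overline{D}_{p+1}^{\ell_{p+1}}$ if and only if $\norm{h}_{p+1} < \infty$. Hence the two sets coincide.

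For part (3), the inclusion $\overline{D}_p^{\ell_p} \subseteq \overline{D}_p^\triangle$ follows from the inequality $\norm{h}_{p+1} \le \norm{h}_p$, which is the standard nesting $\ell^p \subseteq \ell^{p+1}$. To see this, note that $\norm{h}_p < \infty$ forces $h_j \to 0$, and after normalizing so that $\sup_j h_j \le 1$ we have $h_j^{p+1} \le h_j^p$. For strictness, I take intervals $I_j = [0, 2j^{-1/p})$, so that $h_j = j^{-1/p}$. Then $\sum_j h_j^p = \sum_j 1/j$ diverges, while $\sum_j h_j^{p+1} = \sum_j j^{-(p+1)/p}$ converges because $(p+1)/p > 1$. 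This $B$ therefore lies in $\overline{D}_p^\triangle$ but not in $\overline{D}_p^{\ell_p}$. The same example, transported to diagrams, handles $\ohp$. None of these steps is a real obstacle. The only points needing care are checking that the second proposition may be applied with exponent $p+1$, and choosing the counterexample so that the intervals are bounded and nonempty.
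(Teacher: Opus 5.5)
Your proposal is correct and takes the same approach as the paper, which states this theorem as a direct corollary of the two preceding propositions, reducing everything to $\lim h = 0$, $\norm{h}_{p+1}<\infty$, or $\norm{h}_p<\infty$. Your explicit example with $h_j = j^{-1/p}$ supplies the strictness in part (3), which the paper leaves implicit.
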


\subsection{Relating Wasserstein distances}

In this section we show that for the cases $p=1$ and $p=\infty$, our Wasserstein distances $\Wpt$ agree with previous definitions.

\begin{proposition} \label{prop:bottleneck}
    If $p=\infty$ then $\Wpt$ coincides with the bottleneck distance~\cite{bottleneck}.
\end{proposition}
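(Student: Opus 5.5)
Since $W_\infty^\triangle = W_\infty(d_\infty)$, the plan is to compute $d_\infty$ explicitly and match it with the interval/point distance used in the bottleneck distance. The bottleneck distance is $W_\infty(\ell_\infty)$, where $\ell_\infty$ is the quotient metric on $\hp/\diag$ (extended to $\ehp/\ediag$) induced by the sup norm. Concretely,
\[
\ell_\infty(x,y) = \min\bigl(\norm{x-y}_\infty,\ \max(\tfrac12(d-b), \tfrac12(d'-b'))\bigr)
\]
for $x=(b,d)$, $y=(b',d')$, and $\ell_\infty(x,\diag) = \tfrac12(d-b)$. Once the point-level identity $d_\infty = \ell_\infty$ is proved on $\ehp/\ediag$ (and hence on intervals via \eqref{cd:sets}), the two Wasserstein distances coincide. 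This is because both are defined by the same formula $\min_\sigma \max_i d(x_i,y_{\sigma(i)})$ applied to the same metric.

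For the point-level identity, the diagonal case is immediate: $\norm{\triangle_x}_\infty = \tfrac12(d-b)$, as already noted in the paper. For two off-diagonal points, write $\triangle_x(t) = \max(0, \min(t-b, d-t))$, a tent of height $h=\tfrac12(d-b)$ centered at $c=\tfrac12(b+d)$.

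The upper bound $d_\infty(x,y)\le \ell_\infty(x,y)$ has two parts.
\begin{enumerate*}[label=(\roman*)]
\item Each of the maps $(b,d)\mapsto \max(0,\min(t-b,d-t))$ is $1$-Lipschitz in the sup norm, so $\norm{\triangle_x-\triangle_y}_\infty \le \norm{x-y}_\infty$.
\item Since both tents are nonnegative, $\abs{\triangle_x-\triangle_y}\le \max(\triangle_x,\triangle_y)$, which gives the bound $\max(h,h')$.
\end{enumerate*}

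The lower bound is the main step. I would split into cases.
\begin{enumerate*}[label=(\roman*)]
\item If the supports are disjoint or far apart, evaluating at the peak of the taller tent gives $\norm{\triangle_x-\triangle_y}_\infty \ge \max(h,h')$. If the supports overlap, the value there is only $\max(h,h')$ minus the smaller tent's value at that peak, so this case must be folded into case (ii) with care.
\item In general, suppose WLOG $\norm{x-y}_\infty = \abs{b-b'}$ with $b<b'$. Evaluate at $t=b'$, where $\triangle_y(b')=0$ and $\triangle_x(b') = \min(b'-b,\, d-b')$. This yields $\min(\abs{b-b'}, d-b')$. If $d-b' \ge \abs{b-b'}$ we obtain $\norm{x-y}_\infty$. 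Otherwise $d-b' < b'-b$, so the point $b'$ lies past the center of $x$. In that subcase evaluate instead at the peak $c$ of $x$, where $\triangle_x(c)=h$ and $\triangle_y(c)$ is small, and show the difference is at least $\min(\norm{x-y}_\infty, \max(h,h'))$.
\end{enumerate*}

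The obstacle I expect is this subcase bookkeeping: verifying that the best of the test points ($b$, $b'$, $d$, $d'$, $c$, $c'$) always achieves $\min(\norm{x-y}_\infty, \max(h,h'))$. To organize it, I would switch to center/height coordinates $(c,h)$. In these coordinates $\norm{x-y}_\infty = \abs{c-c'}+\abs{h-h'}$, and the tent difference can be computed directly. With the identity established, the equality of Wasserstein distances follows as explained above. The case of infinite endpoints on $\ehp$ is handled by the convention that both sides are then $\infty$ or trivially agree.
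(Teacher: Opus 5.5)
Your tent computation is correct, and it supplies the point-level identity that the paper asserts without proof. The gap is that the metric it produces is not the quotient metric, so your final step fails.

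Collapsing $\diag$ in the sup norm gives $\ell_\infty(x,y)=\min(\norm{x-y}_\infty,\, h+h')$, because a path through the diagonal costs $d(x,\diag)+d(y,\diag)=h+h'$. The formula $\min(\norm{x-y}_\infty,\max(h,h'))$ that you wrote down, and correctly identify with $d_\infty$, is the $\infty$-strengthening $(\ell_\infty)_\infty$ that appears in the paper's proof. The two differ: for $x=(0,2)$ and $y=(10,12)$ one has $d_\infty(x,y)=1$ but $\ell_\infty(x,y)=2$.

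So $d_\infty\neq\ell_\infty$, and the claim that the Wasserstein distances agree because they are ``the same formula applied to the same metric'' is unjustified. What your argument actually establishes is $W_\infty^\triangle=W_\infty((\ell_\infty)_\infty)$. The missing step is $W_\infty((\ell_\infty)_\infty)=W_\infty(\ell_\infty)$, for which the paper cites \cite[Corollary 5.3]{BE2022}. The same gap appears if you compare directly with the matching definition of \cite{bottleneck}. There, two off-diagonal points can never be paired at cost $\max(h,h')$; such a pair must instead be realized as two separate matchings to the diagonal.

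A short swap argument closes the gap.
\begin{enumerate}
\item Since $(\ell_\infty)_\infty\le\ell_\infty$, one inequality is immediate.
\item For the other, take an optimal $\sigma$ for $(\ell_\infty)_\infty$. Suppose it pairs genuine points $x_i,y_j$ with $\max(h_i,h'_j)<\norm{x_i-y_j}_\infty$.
\item Counting slots shows that $\sigma$ has exactly as many diagonal--diagonal pairs as genuine--genuine pairs. So you can swap partners with a diagonal--diagonal pair, sending $x_i$ and $y_j$ each to the diagonal.
\item Their costs become $h_i$ and $h'_j$, so the maximum does not increase.
\item After all such swaps, every remaining pair has equal $\ell_\infty$- and $(\ell_\infty)_\infty$-cost. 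This gives $W_\infty(\ell_\infty)\le W_\infty((\ell_\infty)_\infty)$.
\end{enumerate}

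Separately, your lower-bound case analysis is unnecessary. With $h\ge h'$, evaluating at the peak $c$ of the taller tent already gives
\[
\triangle_x(c)-\triangle_y(c)=h-\max(0,h'-\abs{c-c'})=\min(\max(h,h'),\norm{x-y}_\infty).
\]
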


\begin{proof}
    Let $\ell_{\infty}$ denote the quotient metric on $\hp/\diag$ obtained from the metric on $\hp$ given by the $\infty$ norm.
    Let $(\ell_\infty)_\infty$ denote the $\infty$-strengthening of the $\ell_\infty$ metric~\cite{BE2022}.
    That is, for $x,y \in \hp$, 
    $(\ell_\infty)_\infty(x,y) = \min( \norm{x-y}_\infty, \max( \norm{x-\pi(x)}_\infty, \norm{y-\pi(y)}_\infty) )$,
    where $\pi(b,d) = (\frac{b+d}{2},\frac{b+d}{2})$.
    Then $d_\infty = (\ell_\infty)_\infty$.
    Therefore, $W_\infty^\triangle = W_\infty(d_\infty) = W_\infty((\ell_\infty)_\infty) = W_\infty(\ell_\infty)$,
    where the last equality is due to \cite[Corollary 5.3]{BE2022},
    and the last distance is the bottleneck distance.
\end{proof}

\begin{proposition} \label{prop:rank}
    If $p=1$ then $\Wpt$ coincides with the rank-based Wasserstein distance~\cite{bubenik2026rankbaseddistanceintervalmodules}.
\end{proposition}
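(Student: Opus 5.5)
The plan is to reduce the statement to a comparison of ground metrics on single intervals. By Definition~\ref{def:Wpt}, $W_1^\triangle = W_1(d_1)$. The rank-based Wasserstein distance of \cite{bubenik2026rankbaseddistanceintervalmodules} is, I expect, also a $1$-Wasserstein distance on barcodes. Its ground distance between two intervals (equivalently, between two interval modules) should be the $L^1$ distance between their rank functions. So it suffices to show that $d_1(I,J)$ equals this rank-based distance between the intervals $I$ and $J$. The two $W_1$ distances are then given by literally the same matching formula. This holds first on $D(\widehat{\Int}(\R))$, then on the finite diagrams via Definition~\ref{def:Wpt}, and then on the completions by the limit definition of $W_p(d)$.

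The key computation is a ``layer-cake'' formula for the triangle function. For an interval $I$ and $t \in \R$,
\begin{equation*}
  \triangle_I(t) = \sup\{ h \geq 0 : (t-h,t+h) \subseteq I\} = \int_0^\infty \mathbf{1}\bigl[(t-h,t+h) \subseteq I\bigr]\, dh,
\end{equation*}
where the indicator is the rank function of $I$ evaluated at the pair $(t-h,t+h)$, written in midpoint--radius coordinates. The sets $\{h : (t-h,t+h) \subseteq I\}$ are initial segments of $[0,\infty)$, so for each fixed $t$ the two indicators for $I$ and $J$ are nested. It follows that
\begin{equation*}
  \abs{\triangle_I(t) - \triangle_J(t)} = \int_0^\infty \bigl\lvert \mathbf{1}[(t-h,t+h)\subseteq I] - \mathbf{1}[(t-h,t+h)\subseteq J] \bigr\rvert \, dh.
\end{equation*}
Integrating over $t$ and applying Tonelli's theorem gives that $d_1(I,J)$ is the $L^1$ norm of the difference of the rank functions on $\{(t,h): h \geq 0\}$. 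The change of variables $(s,u) = (t-h,t+h)$ then converts this into the $L^1$ distance of rank functions on $\{s \leq u\}$. The Jacobian of this map is constant, so the two distances differ at most by a constant factor.

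The main obstacle is matching normalizations and conventions with \cite{bubenik2026rankbaseddistanceintervalmodules}. This covers the constant Jacobian factor, whether rank functions there are indexed by closed or half-open pairs, and the treatment of the empty interval as basepoint, which should correspond to the zero module. As a sanity check I will verify the single-interval case: $d_1(I,\emptyset) = \norm{\triangle_I}_1 = h^2$ with $h = \frac12\nu(I)$. This should agree with the rank-based distance from $I$ to $0$ under their normalization. If their rank-based distance is instead defined directly on persistence modules as an infimum over some other structure, not as $W_1$ of a ground metric, I will invoke their result that it agrees with $W_1$ of the rank distance on intervals. The argument above then applies unchanged.
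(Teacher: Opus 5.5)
Your proposal is correct and follows the paper's route. The paper also reduces to the identity of ground metrics $d_1 = d_{\rank}$ on intervals and then concludes $W_1^\triangle = W_1(d_1) = W_1(d_{\rank}) = W_1^{\rank}$, but it obtains $d_1 = d_{\rank}$ simply by citing Lemma~3.2 of \cite{bubenik2026rankbaseddistanceintervalmodules}. Your layer-cake, Tonelli and change-of-variables computation is a valid self-contained proof of that identity: it gives $d_1(I,J) = \tfrac12 \int_{s \le u} \lvert \mathrm{rk}_I - \mathrm{rk}_J \rvert \, ds\, du$, where the factor $\tfrac12$ is your Jacobian. The normalization you flagged as the remaining issue is exactly what the cited lemma settles.
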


\begin{proof}
    By \cite[Lemma 3.2]{bubenik2026rankbaseddistanceintervalmodules},
    $d_1$ %, which they denote $d_\Lambda$ 
    equals $d_{\rank}$.
    Therefore $W_1^\triangle = W_1(d_1) = W_1(d_{\rank}) = W_1^{\rank}$.
\end{proof}

\section*{Acknowledgments}

This research was partially supported 
by the National Science Foundation (NSF) grant DMS-2324353.

\printbibliography

\end{document}